\theoremstyle{definition}
\newtheorem{thm}{Theorem}
\newtheorem{lem}[thm]{Lemma}
\newtheorem{rem}{Remark}
\numberwithin{equation}{section}
\numberwithin{thm}{section}
\numberwithin{rem}{section}
\numberwithin{thm_ja}{section}
\numberwithin{rem_ja}{section}
\title{
Kernel quadrature by applying a point-wise gradient descent method to discrete energies
}
\author{Ken'ichiro Tanaka%
\footnote{
Department of Mathematical Informatics, 
Graduate School of Information Science and Technology,
University of Tokyo. 
7-3-1 Hongo, Bunkyo-ku, Tokyo, 113-8656, Japan. 
e-mail: \texttt{kenichiro@mist.i.u-tokyo.ac.jp}
} 
\footnote{
PRESTO, Japan Science and Technological Agency (JST),
4-1-8 Honcho, Kawaguchi-shi, Saitama, 332-0012, Japan.
}
}
\date{\today}
\begin{document}

\maketitle

\begin{abstract}
We propose a method for generating nodes for kernel quadrature by 
a point-wise gradient descent method. 
For kernel quadrature, 
most methods for generating nodes
are based on the worst case error of a quadrature formula in a reproducing kernel Hilbert space corresponding to the kernel. 
In typical ones among those methods, 
a new node is chosen among a candidate set of points in each step 
by an optimization problem with respect to a new node. 
Although such sequential methods are appropriate for adaptive quadrature, 
it is difficult to apply standard routines for mathematical optimization to the problem.
In this paper, 
we propose a method 
that updates a set of points one by one 
with a simple gradient descent method. 
To this end, 
we provide an upper bound of the worst case error by 
using the fundamental solution of the Laplacian on $\mathbf{R}^{d}$. 
We observe the good performance of the proposed method by numerical experiments. 

\bigskip

\noindent
\textbf{Keywords} \, 
Kernel quadrature, 
Point-wise gradient descent method, 
Discrete energy, 
Reproducing kernel Hilbert space, 
Gaussian kernel

\bigskip

\noindent
\textbf{Mathematics Subject Classification} \, 
65D30, 
65D32, 
65K05, 
41A55, 
41A63

\end{abstract}

\section{Introduction}

This paper is concerned with kernel quadrature, 
an approach to deriving numerical integration formulas with kernels. 
Let $d \in \mathbf{Z}_{+}$ be a positive integer, 
$\Omega \subset \mathbf{R}^{d}$ a region with an non-empty interior in $\mathbf{R}^{d}$, 
and $\mu$ a Borel measure on $\Omega$.
A formula for quadrature is formally expressed as
\begin{align}
\int_{\Omega} f(x) \, \mathrm{d}\mu(x)
\approx
\sum_{j=1}^{N} w_{j} \, f(x_{j}),
\label{eq:num_int_H_K}
\end{align}
where
$\mathcal{X}_{N} = \{x_{1}, \ldots , x_{N} \} \subset \Omega$ and 
$\mathcal{W}_{N} = \{ w_{1}, \ldots, w_{N} \} \subset \mathbf{R}$
are the sets of distinct nodes and weights, respectively. 

Besides
Monte Carlo (MC) methods and 
quasi Monte Carlo (QMC) methods, 
kernels have been used to derive numerical integration formulas recently. 
There are several categories of such kernel methods. 
We can see a category of randomized methods for choosing nodes $x_{j}$. 
It includes 
importance sampling \cite{bib:LiuLee_imp_sampl_2017}, 
random feature expansions~\cite{bib:Bach_equi_kerquad_randfeat_2017}, 
kernel quadrature with determinantal point processes (DPPs) \cite{bib:Belhadji_etal_KerDPP_2019}, etc. 

On the other hand, 
there are categories of deterministic methods. 
One of them consists of sequential algorithms choosing nodes $x_{j}$ one-by-one with greedy ways. 
Such algorithms are
kernel herding (KH)
\cite{bib:Bach_etal_herding_2012, bib:Chen_etal_kernel_herding_2010}, 
sequential Bayesian quadrature (SBQ) 
\cite{bib:HusDuven_SBQ_2012, bib:Oettershagen_PhD_2017}, 
orthogonal matching persuit (OMP)
\cite{bib:Oettershagen_PhD_2017}, 
and their variants  
\cite{bib:Briol_etal_FWBQ_2015, 
bib:Chen_etal_Stein_2018, 
bib:Joseph_etal_min_ene_2015, 
bib:Joseph_etal_min_ene_2019,
bib:Pronzato_etal_min_ene_2020, 
bib:Pronzato_etal_min_ene_snglr_2021, 
bib:Teymur_etal_MMD_2020}. 
For example, the SBQ is realized by the procedure
\begin{align}
x_{N+1} \in \mathop{\mathrm{argmin}}_{x \in \Omega} 
\left( 
e^{\mathrm{wor}}(\mathcal{X}_{N} \cup \{ x \}, \mathcal{W}_{N+1}^{\ast}; \mathcal{H}_{K}) 
\right)
\qquad
(n = 1,2,\ldots),
\label{eq:SBQ}
\end{align}
where $e^{\mathrm{wor}}$, $\mathcal{W}_{N+1}^{\ast}$, and $\mathcal{H}_{K}$
denote an worst case error, optimal set of weights, and reproducing kernel Hilbert space
defined later in~\eqref{eq:def_WCE}, \eqref{eq:opt_weights}, and Section~\ref{sec:BasicKQ}, respectively. 
The methods in this category are rational in that we can add new nodes with keeping existing nodes. 
Such procedures are useful for adaptive quadrature, 
which is useful in the case of a high-dimensional and/or complicated region $\Omega$.  
However, each step like~\eqref{eq:SBQ}
requires nonlinear and nonconvex optimization choosing nodes among prepared ones in the region $\Omega$. 

There is another category of methods obtaining whole nodes at once. 
For example, 
Oettershagen \cite{bib:Oettershagen_PhD_2017} proposes a method solving nonlinear equation. 
Furthermore, 
Fekete points, which maximize the determinant of a kernel matrix, 
are known to be useful as nodes for quadrature. 
Unfortunately, 
it is hard to find them exactly because the maximization of the determinant is intractable in general. 
However, 
some approximate optimization methods have been proposed 
\cite{bib:KST_appFekete_2020, bib:KTanaka_KQ_conv_opt_2020}, 
although these are aimed at approximation of functions. 
In fact, 
these optimization methods employ convex objective functions for finding a set of nodes. 
They enable us to find a good set of nodes by standard optimization routines
without preparing a candidate set of nodes in $\Omega$. 
In particular, 
a logarithmic energy with an external field
\begin{align}
I(x_1, \ldots, x_{N}) 
& = \varepsilon^2 \sum_{k=1}^{N} x_k^2 + \sum_{1 \leq i < j \leq N} \log \frac{1}{|x_i - x_j|} 
\label{eq:LogEne_OneDim}
\end{align}
is derived in \cite{bib:KST_appFekete_2020} 
as an objective function to find approximate Fekete points 
for a one-dimensional Gaussian kernel 
\(
\displaystyle
K(x, y) 
= 
\exp
\left(
- \varepsilon^{2} (x-y)^{2}
\right)
\)
with $\varepsilon > 0$. 
This tool is restricted to a one-dimensional Gaussian kernel and has the following problems: 
\begin{enumerate}
\item
It is difficult to extend it to higher-dimensional cases. 
\item
The relationship between it and the worst case error $e^{\mathrm{wor}}$ is unclear. 
\end{enumerate}
Therefore
it would be useful for kernel quadrature
if we can obtain an extension of the energy in~\eqref{eq:LogEne_OneDim}
as an upper bound of the worst case error. 
Here we note that 
finding a set of nodes at once by optimization of such an extended energy may be difficult 
when we need to find very many nodes in very high-dimensional region $\Omega$. 
However, 
in such a case, 
such an energy can be used also for a sequential algorithm finding nodes one-by-one. 

In this paper, 
we provide an extension of the energy in~\eqref{eq:LogEne_OneDim}
by using the fundamental solution of the Laplacian on $\mathbf{R}^{d}$
and show that it provides an upper bound of the worst case error. 
A method for deriving these results is based on the idea of \cite{bib:Steinerberger_LogEneS2_2020}.
Based on the bound, 
we propose a method 
that updates a set of points one by one 
with a simple gradient descent method, 
which we call a point-wise gradient descent method. 
This method does not require a candidate set of points in $\Omega$
and relatively easy to be implemented. 

The rest of this paper is organized as follows. 
In Section \ref{sec:BasicKQ}, 
we review some basic notions for kernel quadrature. 
In Section~\ref{sec:1D_appFekete_Gauss}, 
we review the basics about the energy in~\eqref{eq:LogEne_OneDim}. 
In Section~\ref{sec:bd_of_wce_by_fund_sol}, 
we derive an extension of the energy by 
using the fundamental solution of the Laplacian on $\mathbf{R}^{d}$. 
In Section~\ref{sec:approx_min_wce}, 
we propose the point-wise gradient descent method. 
In Section~\ref{sec:num_exper}, 
we show some results of numerical experiments. 
In Section~\ref{sec:concl}, 
we conclude this paper. 

\section{Basic notions for kernel quadrature}
\label{sec:BasicKQ}

Let $K: \Omega \times \Omega \to \mathbf{R}$ be a continuous and symmetric function 
and assume that it is positive definite. 
Let $\mathcal{H}_{K}(\Omega)$ be the reproducing kernel Hilbert space (RKHS) corresponding to the kernel $K$. 
It has the following properties:
\begin{enumerate}
\item
\(
\forall x \in \Omega, \ 
K(\, \cdot, \, x) \in \mathcal{H}_{K}(\Omega), 
\)

\item
\(
\forall f \in \mathcal{H}_{K}(\Omega), \ 
\forall x \in \Omega, \ 
\langle f, \, K(\, \cdot, \, x) \rangle_{\mathcal{H}_{K}} = f(x).
\)

\end{enumerate}
The latter is called a reproducing property. 
We consider quadrature formula~\eqref{eq:num_int_H_K} for a function $f \in \mathcal{H}_{K}(\Omega)$. 
This approximation can be regarded as that of the measure $\mu$ as follows:
\begin{align}
\mu \approx \sum_{j=1}^{N} w_{j} \, \delta_{x_{j}}.
\end{align}
For the formula in~\eqref{eq:num_int_H_K}, 
we can define the worst-case error $e^{\mathrm{wor}}(\mathcal{X}_{N}, \mathcal{W}_{N}; \mathcal{H}_{K})$ by 
\begin{align}
e^{\mathrm{wor}}(\mathcal{X}_{N}, \mathcal{W}_{N}; \mathcal{H}_{K})
=
\sup_{
\begin{subarray}{c}
f \in \mathcal{H}_{K} \\
\| f \|_{\mathcal{H}_{K}} \leq 1
\end{subarray}
} \left|
\int_{\Omega} f(x) \, \mathrm{d}x 
- 
\sum_{i = 1}^{N} w_{i} \, f(x_{i})
\right|.
\label{eq:def_WCE}
\end{align}
In general, it is desirable to construct a good point set $\mathcal{X}_{N}$ and weight set $\mathcal{W}_{N}$ 
that make the worst-case error as small as possible. 
To approach this goal, 
the following well-known expression of the worst-case error is useful. 
\begin{align}
& (e^{\mathrm{wor}}(\mathcal{X}_{N}, \mathcal{W}_{N}; \mathcal{H}_{K}))^{2} 
\notag \\
& = \left \|
\int_{\Omega} K(y, x) \, \mathrm{d}\mu(x)
-
\sum_{j=1}^{N} w_{j} \, K(y, x_{j})
\right \|_{\mathcal{H}_{K}}^{2}
\notag \\
& =
\int_{\Omega} \int_{\Omega} K(y,z) \, \mathrm{d}\mu(y) \mathrm{d}\mu(z)
- 2 \sum_{j=1}^{N} w_{j} \int_{\Omega} K(x_{j}, x) \, \mathrm{d}\mu(x)
+ \sum_{i=1}^{N} \sum_{j=1}^{N} w_{i} w_{j} \, K(x_{i}, x_{j}).
\label{eq:WCE}
\end{align}
This is owing to the reproducing property
$\langle f, \, K( \cdot, \, x) \rangle_{\mathcal{H}_{K}} = f(x)$. 
Therefore 
sets $\mathcal{X}_{N}$ and $\mathcal{W}_{N}$ minimizing the value
\begin{align}
- 2 \sum_{j=1}^{N} w_{j} \int_{\Omega} K(x_{j}, x) \, \mathrm{d}\mu(x)
+ \sum_{i=1}^{N} \sum_{j=1}^{N} w_{i} w_{j} \, K(x_{i}, x_{j})
\label{eq:WCE_two_terms}
\end{align}
are required as a quadrature formula. 
We often assume that 
\begin{align}
w_{i} = \frac{1}{N}
\qquad
(i = 1,\ldots, N)
\label{eq:equi_weights}
\end{align}
for simplicity. 

\section{Approximate Fekete points for Gaussian kernels}
\label{sec:1D_appFekete_Gauss}

\subsection{Expression of the worst case error by determinants}

If we fix the point set $\mathcal{X}_{N}$, the right hand side in~\eqref{eq:WCE}
becomes a quadratic form with respect to the weight set $\mathcal{W}_{N}$.  
Therefore, we can easily find its minimizer $\mathcal{W}_{N}^{\ast} = \{ w_{i}^{\ast} \}$ as follows:
\begin{align}
\boldsymbol{w}^{\ast} = 
\int_{\Omega}
\mathcal{K}_{\mathcal{X}_{N}}^{-1}\, \boldsymbol{k}_{\mathcal{X}_{N}}(x)
\, \mathrm{d}x, 
\label{eq:opt_weights}
\end{align}
where
\(
\boldsymbol{w}^{\ast}
=
(w_{1}^{\ast}, \ldots , w_{N}^{\ast})^{T}
\), and 
\begin{align}
\mathcal{K}_{\mathcal{X}_{N}}
=  
\begin{bmatrix}
K(x_{1}, x_{1}) & K(x_{1}, x_{2}) & \cdots & K(x_{1}, x_{N}) \\
K(x_{2}, x_{1}) & K(x_{2}, x_{2}) & \cdots & K(x_{2}, x_{N}) \\
\vdots & \vdots & \ddots & \vdots \\
K(x_{N}, x_{1}) & K(x_{N}, x_{2}) & \cdots & K(x_{N}, x_{N}) \\
\end{bmatrix},
\quad
\boldsymbol{k}_{\mathcal{X}_{N}}(x)
=
\begin{bmatrix}
K(x, x_{1}) \\ K(x, x_{2}) \\ \vdots \\ K(x, x_{N})
\end{bmatrix}.
\label{eq:A_and_b}
\end{align}
By using these, 
we can express the worst-case error with the optimal weights $\mathcal{W}_{N}^{\ast}$ by
\begin{align}
(e^{\mathrm{wor}}(\mathcal{X}_{N}, \mathcal{W}_{N}^{\ast}; \mathcal{H}_{K}))^{2}
=
\frac{1}{\det \mathcal{K}_{\mathcal{X}_{N}}} \, 
\det \left[
\begin{array}{c|ccc}
k_{0} & k_{1} & \cdots & k_{N} \\
\hline
k_{1} & & & \\
\vdots & & \mathcal{K}_{\mathcal{X}_{N}} & \\
k_{N} & & & 
\end{array}
\right], 
\label{eq:wce_optW}
\end{align}
where
\begin{align}
k_{0} = \int_{\Omega} \int_{\Omega} K(x, y) \, \mathrm{d}x \mathrm{d}y, \qquad 
k_{i} = \int_{\Omega} K(x,x_{i}) \, \mathrm{d}x \quad (i = 1,\ldots , n). 
\label{eq:K_integrals}
\end{align}
Clearly, the value $e^{\mathrm{wor}}(\mathcal{X}_{N}, \mathcal{W}_{N}^{\ast}; \mathcal{H}_{K})$
is less than or equal to the worst-case error with the equal weights in \eqref{eq:equi_weights}: 
\begin{align}
e^{\mathrm{wor}}(\mathcal{X}_{N}, \mathcal{W}_{N}^{\ast}; \mathcal{H}_{K})
\leq 
e^{\mathrm{wor}}(\mathcal{X}_{N}, \{ 1/N \}_{i=1}^{N}; \mathcal{H}_{K}).
\label{eq:opt_w_leq_equi_w}
\end{align}

According to Formula~\eqref{eq:wce_optW}%
\footnote{Although its derivation is fundamental, we write it in Appendix~\ref{sec:wce_optW} for readers' convenience.}, 
maximization of the determinant of the matrix $\mathcal{K}_{\mathcal{X}_{N}}$ seems to be useful, 
although we do not have some reasonable estimate of the worst case error for its maximizer: the Fekete points. 
Unfortunately, the maximization of $\det \mathcal{K}_{\mathcal{X}_{N}}$ is not tractable in general. 
However, 
in the case of the one-dimensional Gaussian kernel $K$, 
we can obtain a tractable approximation of the determinant by expanding the kernel \cite{bib:KST_appFekete_2020}.

\subsection{Expansion and truncation of the one-dimensional Gaussian kernel}

Let $K(x,y) := \exp(-\varepsilon^{2} |x - y|^{2})$ be the one-dimensional Gaussian kernel. 
Then, we have 
\begin{equation*}
  \begin{split}
  K(x,y)
  \approx
\widehat{K}(x,y) 
  &= \sum_{\ell=0}^{n-1} \varphi_\ell(x) \varphi_\ell(y), 
  \end{split}
\end{equation*}
where 
\(
\displaystyle
\varphi_\ell(x) 
:= 
\exp(-\varepsilon^2 x^2) 
\sqrt{\frac{2^\ell \varepsilon^{2\ell}}{\ell!}} x^\ell
\). 
Let 
$\widehat{\mathcal{K}}_{\mathcal{X}_N} = (\widehat{K}(x_k,x_m))_{k,m=1}^{N} \in \mathbf{R}^{n \times n}$ 
be the corresponding kernel matrix, 
where $\mathcal{X}_N = \{ x_{1}, \ldots. x_{N} \}$. 
Then, it is shown in \cite{bib:KST_appFekete_2020} that 
\begin{align}
-\log \big( \det \widehat{\mathcal{K}}_{\mathcal{X}_{N}} \big)^{1/2} 
=
C_{\varepsilon, N} + \varepsilon^2 \sum_{k=1}^{N} x_k^2 + \sum_{1 \leq i < j \leq N} \log \frac{1}{|x_i - x_j|}, 
\end{align}
where $C_{\varepsilon, N}$ is a constant independent of $\mathcal{X}_{N}$. 
This is the logarithmic energy with an external field shown in~\eqref{eq:LogEne_OneDim}, 
for which we need to find a minimizer. 
We can show that this energy is convex and that there exists a unique minimiser 
$\mathcal{X}_{N}^* \in \mathcal{R}_N := = \{ (x_1, \ldots, x_{N}) \in \Omega^{N} \mid  x_1 < x_2 < \cdots < x_{N-1} < x_{N} \}$.
Therefore
we can find $\mathcal{X}_{N}^{\ast}$ numerically by using a standard optimization technique like the Newton method. 

Unfortunately, 
extension of the above argument to higher-dimensional cases is difficult 
because of the complicated structure of the kernel matrix of the truncated kernel $\widehat{K}$ for $d \geq 2$. 
Therefore 
we introduce another way for such extension in Section~\ref{sec:bd_of_wce_by_fund_sol} below. 

\section{Bound for a discrete energy of the Gaussian kernel with the fundamental solution of the Laplacian}
\label{sec:bd_of_wce_by_fund_sol}

Taking the relation in~\eqref{eq:opt_w_leq_equi_w} into account, 
we focus on the case of the equal weights in this section. 

\subsection{Discrete energy given by an integral of the heat kernel}

Let $d \geq 2$. 
We use the $d$-dimensional fundamental solution $G_{d}(x,y)$
for the Laplacian $\varDelta$ on $\mathbf{R}^{d}$. 
It is given by the following expression: 
\begin{align}
G_{d}(x,y) =
\begin{cases}
\dfrac{1}{2\pi} \log \| x - y\| & (d = 2), \\[6pt]
- \dfrac{1}{2 (d-2) s_{d}} \, \dfrac{1}{\| x - y \|^{d-2}} & (d \geq 3),
\end{cases}
\label{eq:fund_sol_Laplace}
\end{align}
where $s_{d}$ is the surface area of the $(d-1)$ dimensional unit sphere. 
The fundamental solution satisfies
\begin{align}
\varDelta_{x} G_{d}(x,y) = \delta_{y}(x) \quad (= \delta(x - y)).
\label{eq:Laplace_Green_delta}
\end{align}
The following lemmas are based on the ideas in \cite{bib:Steinerberger_LogEneS2_2020}, 
whereas their proofs are slightly different from those in the paper. 
Their proofs are provided in Section~\ref{sec:proofs}.

\begin{lem}
\label{lem:Green_exps_delta_expt_delta}
Let $s$ and $t$ be positive real numbers
and 
let $a$ and $b$ be points in $\mathbf{R}^{d}$.
Then the following equality holds:
\begin{align}
\int_{\mathbf{R}^{d}}
\mathrm{d}x
\int_{\mathbf{R}^{d}}
\mathrm{d}y \
G_{d}(x,y) 
\, \mathrm{e}^{s \varDelta_{x}} \delta_{a}(x)
\, \mathrm{e}^{t \varDelta_{y}} \delta_{b}(y)
=
\int_{\mathbf{R}^{d}}
\mathrm{d}y \
G_{d}(a,y) 
\, \mathrm{e}^{(s+t) \varDelta_{y}} \delta_{b}(y).
\notag
\end{align}
\end{lem}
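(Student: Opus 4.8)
The plan is to read $\mathrm{e}^{s\varDelta_x}\delta_a(x)$ as the Gaussian heat kernel $p_s(x,a) := (4\pi s)^{-d/2}\exp(-\|x-a\|^2/(4s))$, so that the asserted equality becomes $\iint_{\mathbf{R}^d\times\mathbf{R}^d} G_d(x,y)\,p_s(x,a)\,p_t(y,b)\,\mathrm{d}x\,\mathrm{d}y = \int_{\mathbf{R}^d} G_d(a,y)\,p_{s+t}(y,b)\,\mathrm{d}y$. I would base the whole argument on two standard properties of the heat kernel on $\mathbf{R}^d$: its symmetry $p_s(x,y)=p_s(y,x)$, and the Chapman--Kolmogorov semigroup identity $\int_{\mathbf{R}^d} p_\sigma(x,z)\,p_\tau(z,y)\,\mathrm{d}z = p_{\sigma+\tau}(x,y)$, which encodes $\mathrm{e}^{\sigma\varDelta}\mathrm{e}^{\tau\varDelta}=\mathrm{e}^{(\sigma+\tau)\varDelta}$.

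The conceptual reason the identity holds is that the Green operator $\mathcal{G}$, defined by $(\mathcal{G} f)(x)=\int_{\mathbf{R}^d} G_d(x,y)\,f(y)\,\mathrm{d}y$, and the heat semigroup $\mathrm{e}^{s\varDelta}$ are both translation-invariant convolution operators and therefore commute. Writing the left-hand side as the $L^2$ pairing $\langle p_s(\cdot,a),\,\mathcal{G}\,p_t(\cdot,b)\rangle$, I would move $\mathrm{e}^{s\varDelta}$ onto the second factor using its self-adjointness, commute it past $\mathcal{G}$, collapse $\mathrm{e}^{s\varDelta}p_t(\cdot,b)=p_{s+t}(\cdot,b)$ by the semigroup law, and then pair the result against $\delta_a$ to obtain $\int_{\mathbf{R}^d} G_d(a,y)\,p_{s+t}(y,b)\,\mathrm{d}y$, which is exactly the right-hand side. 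This argument is uniform in the dimension $d\ge 2$.

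To make the computation completely explicit for $d\ge 3$ I would use the time-integral representation $G_d(x,y) = -\int_0^\infty p_\tau(x,y)\,\mathrm{d}\tau$, whose sign and normalization follow by applying $\varDelta_x$ under the integral and using $\partial_\tau p_\tau = \varDelta p_\tau$, which telescopes to $p_0-p_\infty = \delta_y$ with $p_\infty = 0$, reproducing $\varDelta_x G_d(x,y)=\delta_y(x)$. Substituting this into each side, interchanging the $\tau$-integral with the spatial integrals, and applying Chapman--Kolmogorov twice on the left (integrating first in $x$, then in $y$) and once on the right, both sides reduce to $-\int_{s+t}^\infty p_\sigma(a,b)\,\mathrm{d}\sigma$, which settles the equality.

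The main difficulty is rigor rather than algebra. The Fubini interchange must be justified against the diagonal singularity of $p_\tau$ as $\tau\downarrow 0$ and its $\tau^{-d/2}$ decay as $\tau\to\infty$ (integrable after the shift by $s+t>0$). The genuinely different case is $d=2$, where $\int_0^\infty p_\tau\,\mathrm{d}\tau$ diverges logarithmically; there I would truncate and renormalize, using $G_2(x,y)=\lim_{R\to\infty}\bigl(-\int_0^R p_\tau(x,y)\,\mathrm{d}\tau + C_R\bigr)$ with a space-independent divergent constant $C_R$. The key point is that $C_R$ contributes the same amount to both sides, since $p_s(\cdot,a)\,p_t(\cdot,b)$ on the left and $p_{s+t}(\cdot,b)$ on the right are each probability densities of total mass one, so $C_R$ cancels in the limit; justifying this $R\to\infty$ passage by dominated convergence is where I expect the care to be needed.
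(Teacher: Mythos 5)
Your proposal is correct, but it proves the identity by a genuinely different route than the paper. The paper's proof is an ODE-style argument: it differentiates both sides with respect to $s$, moves $\varDelta_{x}$ onto $G_{d}$ by integration by parts, invokes $\varDelta_{x}G_{d}(x,y)=\delta_{y}(x)$ to show the two $s$-derivatives coincide, and then matches the two sides in the limit $s\to+0$; it never uses the explicit Gaussian form of $\mathrm{e}^{t\varDelta}\delta_{b}$ in this lemma, only the defining property of the fundamental solution, so it is uniform in $d$ and needs no renormalization, at the price of being formal (differentiation under the integral, delta calculus, and the boundary terms in the integration by parts are not justified). You instead use the semigroup (Chapman--Kolmogorov) law directly, together with the representation of the Green function as a time integral of the heat kernel, $G_{d}(x,y)=-\int_{0}^{\infty}p_{\tau}(x,y)\,\mathrm{d}\tau$ for $d\geq 3$, which collapses both sides to the same closed form $-\int_{s+t}^{\infty}p_{\sigma}(a,b)\,\mathrm{d}\sigma$; this is actually stronger than the lemma (it essentially contains Lemma~\ref{lem:Green_expt_delta_disj} as well), and since all factors have a fixed sign, Tonelli makes the interchanges rigorous for free. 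The cost is that $d=2$ becomes an exceptional case requiring the truncation-and-renormalization argument you sketch; your observation that the divergent constant $C_{R}$ cancels because $p_{s}(\cdot,a)\,p_{t}(\cdot,b)$ and $p_{s+t}(\cdot,b)$ both have total mass one is exactly the right mechanism, and the monotonicity in $R$ of the renormalized truncation gives the needed convergence theorem. One minor caveat: your representation reproduces the standard normalization $-\tfrac{1}{(d-2)s_{d}}\|x-y\|^{-(d-2)}$, which differs from the constant in~\eqref{eq:fund_sol_Laplace} by a factor of $2$; this is immaterial here because the claimed identity is linear in $G_{d}$, so it holds for any constant multiple, but it is worth flagging rather than silently absorbing.
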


\begin{lem}
\label{lem:Green_expt_delta_eq}
Let $t$ be a positive real number
and 
let $b$ be a point in $\mathbf{R}^{d}$.
Then, the value
\begin{align}
\int_{\mathbf{R}^{d}}
\mathrm{d}y \
G_{d}(b,y) 
\, \mathrm{e}^{t \varDelta_{y}} \delta_{b}(y)
\notag
\end{align}
is bounded and depends only on $d$ and $t$. 
\end{lem}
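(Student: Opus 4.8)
The plan is to evaluate the integral directly by recognising $\mathrm{e}^{t\varDelta_{y}}\delta_{b}(y)$ as the Gaussian heat kernel. Since $\mathrm{e}^{t\varDelta_{y}}$ is the solution operator of the heat equation $\partial_{t}u = \varDelta u$, applying it to the Dirac mass at $b$ yields
\begin{align}
\mathrm{e}^{t\varDelta_{y}}\delta_{b}(y) = \frac{1}{(4\pi t)^{d/2}}\exp\!\left(-\frac{\|y-b\|^{2}}{4t}\right),
\notag
\end{align}
so that the quantity in question becomes $\int_{\mathbf{R}^{d}} G_{d}(b,y)\,(4\pi t)^{-d/2}\exp(-\|y-b\|^{2}/(4t))\,\mathrm{d}y$.

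First I would substitute $z = y - b$. Because $G_{d}(b,y)$ depends on its arguments only through $\|y-b\|$, and the heat kernel likewise depends only on $\|y-b\|$, the point $b$ disappears entirely after this translation. This already establishes the second assertion, namely that the value depends only on $d$ and $t$. It then remains to verify boundedness, i.e. convergence of
\begin{align}
\int_{\mathbf{R}^{d}} G_{d}(0,z)\,\frac{1}{(4\pi t)^{d/2}}\exp\!\left(-\frac{\|z\|^{2}}{4t}\right)\mathrm{d}z.
\notag
\end{align}

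Next I would pass to spherical coordinates. As the integrand is radial, the angular part contributes the constant factor $s_{d}$ and the radial Jacobian is $r^{d-1}$, reducing the problem to a single integral over $r \in (0,\infty)$. At infinity the Gaussian factor $\exp(-r^{2}/(4t))$ forces rapid decay, so there is no difficulty from large $r$. Near $r = 0$ the fundamental solution is singular---of order $\log r$ when $d = 2$ and of order $r^{-(d-2)}$ when $d \geq 3$---but the Jacobian $r^{d-1}$ compensates exactly: the radial integrand behaves like $r\log r$ for $d = 2$ and like $r$ for $d \geq 3$, both integrable near the origin. Hence the integral is finite.

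The main obstacle is precisely the integrability of the singular factor $G_{d}$ at the origin; the key observation that resolves it is that the spherical Jacobian $r^{d-1}$ cancels the singularity down to a harmless $O(r\log r)$ or $O(r)$ behaviour. Once this is noted, the surviving radial integrals are elementary---indeed one may evaluate them in closed form (for instance obtaining $-t/((d-2)(4\pi t)^{d/2})$ when $d \geq 3$)---which simultaneously confirms finiteness and the sole dependence on $d$ and $t$.
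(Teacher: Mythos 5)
Your proposal is correct and follows essentially the same route as the paper: identify $\mathrm{e}^{t\varDelta_{y}}\delta_{b}$ as the Gaussian heat kernel, translate by $b$ to see that the value depends only on $d$ and $t$, and then control the integrable singularity of $G_{d}$ at the origin (via the $r^{d-1}$ Jacobian in polar coordinates) together with the Gaussian decay at infinity. The only cosmetic difference is that the paper splits the integral into the unit ball and its complement (citing its appendix for the near-origin part and a linear bound on $|G_{d}|$ outside), whereas you run a single radial computation and even note the closed form $-t/((d-2)(4\pi t)^{d/2})$ for $d\geq 3$, which is a slightly more streamlined execution of the same idea.
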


\begin{lem}
\label{lem:Green_expt_delta_disj}
Let $t$ be a positive real number
and 
let $a$ and $b$ be disjoint points in $\mathbf{R}^{d}$.
Then the following equality holds:
\begin{align}
\int_{\mathbf{R}^{d}}
\mathrm{d}y \
G_{d}(a,y) 
\, \mathrm{e}^{t \varDelta_{y}} \delta_{b}(y)
=
G_{d}(a,b) + \int_{0}^{t} \frac{1}{(4\pi s)^{d/2}} \, \exp \left( -\frac{\| a - b \|^{2}}{4s} \right) \, \mathrm{d}s.
\notag
\end{align}
\end{lem}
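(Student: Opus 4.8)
The plan is to recognize $\mathrm{e}^{t\varDelta_y}$ as the heat semigroup, so that
\[
u(t,y) := \mathrm{e}^{t\varDelta_y}\delta_{b}(y) = \frac{1}{(4\pi t)^{d/2}}\exp\left(-\frac{\|y-b\|^{2}}{4t}\right)
\]
is, for every $t>0$, a smooth and rapidly decaying function of $y$ that solves the heat equation $\partial_t u = \varDelta_y u$ with $u(0^+,\cdot)=\delta_{b}$ in the weak sense; note that the normalization here is exactly the one appearing on the right-hand side of the claim. I then set $F(t):=\int_{\mathbf{R}^{d}} G_{d}(a,y)\,u(t,y)\,\mathrm{d}y$, which is the left-hand side. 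Since $a\neq b$, the function $G_{d}(a,\cdot)$ is continuous near $b$, and letting $t\to 0^+$ while using the weak convergence $u(t,\cdot)\to\delta_{b}$ gives the initial value $F(0^+)=G_{d}(a,b)$, the first term on the right-hand side.

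The core of the argument is to differentiate $F$ in $t$ and identify $F'$. For fixed $t>0$, differentiation under the integral sign is justified by dominated convergence, since $\partial_t u = \varDelta_y u$ is smooth and Gaussian-decaying while $G_{d}(a,\cdot)$ is locally integrable (its singularity at $y=a$ is of order $\|y-a\|^{2-d}$ for $d\geq 3$ and logarithmic for $d=2$, both integrable). Hence
\[
F'(t) = \int_{\mathbf{R}^{d}} G_{d}(a,y)\,\varDelta_y u(t,y)\,\mathrm{d}y.
\]
I would then transfer the Laplacian onto $G_{d}$ by Green's second identity and invoke $\varDelta_y G_{d}(a,y)=\delta_{a}(y)$ from~\eqref{eq:Laplace_Green_delta}, which yields $F'(t)=u(t,a)=\frac{1}{(4\pi t)^{d/2}}\exp(-\|a-b\|^{2}/(4t))$. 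Integrating this identity over $[0,t]$ and adding $F(0^+)$ produces exactly the asserted formula; the integrand is integrable at $s=0$ precisely because $a\neq b$ forces the Gaussian factor to vanish super-polynomially there.

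The main obstacle is the rigorous justification of the integration-by-parts step, because $G_{d}(a,\cdot)$ is singular at $y=a$ and Green's identity cannot be applied naively on all of $\mathbf{R}^{d}$. I would handle this by excising a small ball $B_{\rho}(a)$ and working on the annular region $B_{R}(0)\setminus B_{\rho}(a)$, on which $G_{d}(a,\cdot)$ is harmonic, so that
\[
\int_{B_{R}(0)\setminus B_{\rho}(a)} G_{d}(a,y)\,\varDelta_y u\,\mathrm{d}y
=
\int_{\partial(B_{R}(0)\setminus B_{\rho}(a))}\left( G_{d}\,\frac{\partial u}{\partial n} - u\,\frac{\partial G_{d}}{\partial n}\right)\mathrm{d}S.
\]
Letting $R\to\infty$ kills the outer boundary terms by the Gaussian decay of $u$ and its gradient against the at-most-polynomial growth of $G_{d}$; letting $\rho\to 0$, the term containing $G_{d}\,\partial u/\partial n$ vanishes (the surface area $\sim\rho^{d-1}$ dominates the singularity), while the remaining term $-\int_{\partial B_{\rho}(a)} u\,\partial G_{d}/\partial n\,\mathrm{d}S$ converges to $u(t,a)$. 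This limit is precisely the classical computation underlying $\varDelta_y G_{d}(a,y)=\delta_{a}(y)$, and it supplies the heat-kernel value $F'(t)=u(t,a)$ that drives the conclusion.
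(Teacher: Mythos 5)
Your proof is correct and follows essentially the same route as the paper's: both differentiate the left-hand side in $t$, transfer the Laplacian onto $G_{d}(a,\cdot)$ so that $\varDelta_{y} G_{d}(a,y) = \delta_{a}(y)$ yields the heat-kernel value $\frac{1}{(4\pi t)^{d/2}} \exp\left( -\|a-b\|^{2}/(4t) \right)$, and then integrate in time from the initial value $G_{d}(a,b)$ recovered as $t \to 0^{+}$. The only difference is that you supply a rigorous justification (excision of a ball around the singularity plus Green's second identity and the limits $R \to \infty$, $\rho \to 0$) for the integration-by-parts step that the paper performs only formally.
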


For a discrete measure
\begin{align}
\mu_{N} = \frac{1}{N} \sum_{j=1}^{N} \delta_{x_{j}}
\label{eq:def_disc_meas}
\end{align}
with disjoint sets $\{ x_{i} \}_{i=1}^{N}$, 
we introduce a renormalized energy. 
To this end, 
for $\mu_{N}$ and a positive number $t > 0$, 
we define $A_{d}(t, \mu_{N})$ by
\begin{align}
A_{d}(t, \mu_{N})
:= 
\int_{\mathbf{R}^{d}}
\mathrm{d}x
\int_{\mathbf{R}^{d}}
\mathrm{d}y \
G_{d}(x,y) 
\, \mathrm{e}^{\frac{t}{2} \varDelta_{x}} \mu_{N}(x)
\, \mathrm{e}^{\frac{t}{2} \varDelta_{y}} \mu_{N}(y). 
\label{eq:def_A_d_t_mu}
\end{align}
Furthermore, we define a constant:
\begin{align}
C_{d}(t)
:=
\int_{\mathbf{R}^{d}}
\mathrm{d}y \
G_{d}(0,y) 
\, \mathrm{e}^{t \varDelta_{y}} \delta_{0}(y).
\label{eq:def_C_d_t}
\end{align}
Then, 
we can derive the following relation 
by using the above lemmas. 

\begin{thm}
\label{thm:int_heat_kernel_ene}
Let $t > 0$ be a positive number. 
Then, we have
\begin{align}
\frac{1}{N^{2}} \sum_{i \neq j} 
\int_{0}^{t} \frac{1}{(4\pi s)^{d/2}} \, \exp \left( -\frac{\| x_{i} - x_{j} \|^{2}}{4s} \right) \, \mathrm{d}s
= 
A_{d}(t, \mu_{N}) - \frac{C_{d}(t)}{N} - \frac{1}{N^{2}} \sum_{i \neq j} G_{d}(x_{i}, x_{j}). 
\label{eq:ene_Gau_and_fund_sol}
\end{align}
\end{thm}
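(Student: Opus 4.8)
The plan is to expand the definition of $A_{d}(t,\mu_{N})$ in terms of the point masses, apply the three preliminary lemmas termwise, and then separate the resulting double sum into its diagonal and off-diagonal parts. First I would substitute $\mu_{N} = \frac{1}{N}\sum_{j=1}^{N}\delta_{x_{j}}$ into~\eqref{eq:def_A_d_t_mu}. By linearity of the heat semigroup $\mathrm{e}^{\frac{t}{2}\varDelta}$ and of the double integral, this gives
\[
A_{d}(t,\mu_{N}) = \frac{1}{N^{2}}\sum_{i=1}^{N}\sum_{j=1}^{N} \int_{\mathbf{R}^{d}}\mathrm{d}x\int_{\mathbf{R}^{d}}\mathrm{d}y\, G_{d}(x,y)\, \mathrm{e}^{\frac{t}{2}\varDelta_{x}}\delta_{x_{i}}(x)\, \mathrm{e}^{\frac{t}{2}\varDelta_{y}}\delta_{x_{j}}(y).
\]
To each summand I would then apply Lemma~\ref{lem:Green_exps_delta_expt_delta}, taking both $s$ and $t$ in that lemma equal to $t/2$ and setting $a=x_{i}$, $b=x_{j}$. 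This collapses the double integral to the single integral $\int_{\mathbf{R}^{d}}\mathrm{d}y\, G_{d}(x_{i},y)\,\mathrm{e}^{t\varDelta_{y}}\delta_{x_{j}}(y)$, so that $A_{d}(t,\mu_{N}) = \frac{1}{N^{2}}\sum_{i,j}\int_{\mathbf{R}^{d}} G_{d}(x_{i},y)\,\mathrm{e}^{t\varDelta_{y}}\delta_{x_{j}}(y)\,\mathrm{d}y$.

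Second, I would split this sum according to whether $i=j$ or $i\neq j$. For the $N$ diagonal terms, Lemma~\ref{lem:Green_expt_delta_eq} guarantees that each integral $\int_{\mathbf{R}^{d}} G_{d}(x_{i},y)\,\mathrm{e}^{t\varDelta_{y}}\delta_{x_{i}}(y)\,\mathrm{d}y$ is a finite quantity depending only on $d$ and $t$. By translation invariance of both $G_{d}$ (which depends only on $x-y$) and of the heat semigroup, this common value is exactly $C_{d}(t)$ as defined in~\eqref{eq:def_C_d_t}; explicitly, the change of variables $y\mapsto y+x_{i}$ sends the $i$-th diagonal integral to the integral defining $C_{d}(t)$. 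Hence the diagonal part contributes $\frac{1}{N^{2}}\cdot N\cdot C_{d}(t)=C_{d}(t)/N$. For each off-diagonal term the points $x_{i}$ and $x_{j}$ are distinct, so Lemma~\ref{lem:Green_expt_delta_disj} applies directly and yields $G_{d}(x_{i},x_{j})+\int_{0}^{t}(4\pi s)^{-d/2}\exp\!\big(-\|x_{i}-x_{j}\|^{2}/(4s)\big)\,\mathrm{d}s$.

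Finally I would collect the pieces. Summing the off-diagonal contributions produces $\frac{1}{N^{2}}\sum_{i\neq j}G_{d}(x_{i},x_{j})$ together with $\frac{1}{N^{2}}\sum_{i\neq j}\int_{0}^{t}(4\pi s)^{-d/2}\exp(-\|x_{i}-x_{j}\|^{2}/(4s))\,\mathrm{d}s$, whence
\[
A_{d}(t,\mu_{N}) = \frac{C_{d}(t)}{N} + \frac{1}{N^{2}}\sum_{i\neq j}G_{d}(x_{i},x_{j}) + \frac{1}{N^{2}}\sum_{i\neq j}\int_{0}^{t}\frac{1}{(4\pi s)^{d/2}}\exp\!\left(-\frac{\|x_{i}-x_{j}\|^{2}}{4s}\right)\mathrm{d}s,
\]
and rearranging this identity gives exactly~\eqref{eq:ene_Gau_and_fund_sol}. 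The main difficulty is not algebraic but one of justification: identifying the diagonal value with $C_{d}(t)$ rests on the translation-invariance argument, and, more fundamentally, the entire termwise manipulation (including the use of Fubini and the splitting of the sum) is legitimate only because the objects $\mathrm{e}^{\frac{t}{2}\varDelta}\delta_{x_{i}}$ are genuine smooth, integrable heat-kernel densities rather than raw distributions. This is precisely where the positivity of $t$ and the disjointness of the points become essential, and where the three lemmas carry the analytic weight of the argument.
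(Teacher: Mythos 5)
Your proposal is correct and follows essentially the same route as the paper's own proof: expand $A_{d}(t,\mu_{N})$ over the point masses, apply Lemma~\ref{lem:Green_exps_delta_expt_delta} with both time parameters equal to $t/2$, split into diagonal and off-diagonal terms handled by Lemmas~\ref{lem:Green_expt_delta_eq} and~\ref{lem:Green_expt_delta_disj}, and rearrange. Your added remark on translation invariance identifying the diagonal value with $C_{d}(t)$ is a point the paper delegates to the proof of Lemma~\ref{lem:Green_expt_delta_eq}, so nothing is missing.
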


\begin{proof}
By using Lemmas~\ref{lem:Green_exps_delta_expt_delta}, \ref{lem:Green_expt_delta_eq}, and~\ref{lem:Green_expt_delta_disj}, 
we can derive the following equalities: 
\begin{align}
A_{d}(t, \mu_{N})
& = 
\frac{1}{N^{2}} \sum_{i=1}^{N} \sum_{j=1}^{N} 
\int_{\mathbf{R}^{d}}
\mathrm{d}x
\int_{\mathbf{R}^{d}}
\mathrm{d}y \
G_{d}(x,y) 
\, \mathrm{e}^{\frac{t}{2} \varDelta_{x}} \delta_{x_{i}}(x)
\, \mathrm{e}^{\frac{t}{2} \varDelta_{y}} \delta_{x_{j}}(y)
\notag \\
& = 
\frac{1}{N^{2}} \sum_{i=1}^{N} \sum_{j=1}^{N} 
\int_{\mathbf{R}^{d}}
\mathrm{d}y \
G_{d}(x_{i},y) 
\, \mathrm{e}^{t \varDelta_{y}} \delta_{x_{j}}(y)
\qquad (\because \text{Lemma~\ref{lem:Green_exps_delta_expt_delta}})
\notag \\
& = 
\frac{1}{N^{2}} 
\left( 
\sum_{i=j} + \sum_{i \neq j}
\right)
\int_{\mathbf{R}^{d}}
\mathrm{d}y \
G_{d}(x_{i},y) 
\, \mathrm{e}^{t \varDelta_{y}} \delta_{x_{j}}(y)
\notag \\
& = 
\frac{C_{d}(t)}{N}
+ 
\frac{1}{N^{2}} \sum_{i \neq j}
\left\{
G_{d}(x_{i}, x_{j}) + \int_{0}^{t} \frac{1}{(4\pi s)^{d/2}} \, \exp \left( -\frac{\| x_{i} - x_{j} \|^{2}}{4s} \right) \, \mathrm{d}s
\right\}
\notag \\
& \phantom{=} \ \  
(\because \text{Lemmas~\ref{lem:Green_expt_delta_eq} and~\ref{lem:Green_expt_delta_disj}}). 
\notag
\end{align}
Hence the conclusion follows.
\end{proof}

Here we present a sketch of an idea to estimate both sides of~\eqref{eq:ene_Gau_and_fund_sol}. 
Suppose that
\begin{itemize}
\item
the points $x_{j}$ are in a bounded region $\Omega \subset B[0,r] \subset \mathbf{R}^{d}$, and

\item
$t$ is sufficiently large so that $A_{d}(t,\mu_{N})$ is almost independent of $\{ x_{j} \}_{j=1}^{N}$. 

\end{itemize}
Under these conditions, we may have
\begin{align}
& \int_{0}^{t} \frac{1}{(4\pi s)^{d/2}} \, \exp \left( -\frac{\| x_{i} - x_{j} \|^{2}}{4s} \right) \, \mathrm{d}s
\notag \\
& =
\left( \int_{0}^{r} + \int_{r}^{t} \right) \frac{1}{(4\pi s)^{d/2}} \, \exp \left( -\frac{\| x_{i} - x_{j} \|^{2}}{4s} \right) \, \mathrm{d}s
\notag \\
& \approx 
\int_{0}^{r} \frac{1}{(4\pi s)^{d/2}} \, \exp \left( -\frac{\| x_{i} - x_{j} \|^{2}}{4s} \right) 
+
\int_{r}^{t} \frac{1}{(4\pi s)^{d/2}} \, \mathrm{d}s \\
& \approx
\hat{C}_{d,r} \, \exp \left( -\frac{\| x_{i} - x_{j} \|^{2}}{4r} \right) 
+
\int_{r}^{t} \frac{1}{(4\pi s)^{d/2}} \, \mathrm{d}s.
\end{align}
Therefore we may state that 
the terms depending on $\{ x_{j} \}_{j=1}^{N}$ in~\eqref{eq:ene_Gau_and_fund_sol} are:
\begin{itemize}
\item
\(
\displaystyle
\frac{1}{N^{2}} \sum_{i \neq j} \hat{C}_{d,r} \, \exp \left( -\frac{\| x_{i} - x_{j} \|^{2}}{4r} \right) 
\)
\quad (in the LHS) \ and

\item
\(
\displaystyle
- \frac{1}{N^{2}} \sum_{i \neq j}
G_{d}(x_{i}, x_{j}) 
\)
\quad (in the RHS). 

\end{itemize}

We make the above rough idea rigorous in Section~\ref{sec:bd_Gau_ene_fund_sol} below. 
That is, 
we derive a bound of the energy of the Gaussian kernel 
by using Theorem~\ref{thm:int_heat_kernel_ene}. 

\subsection{Bound of the discrete energy of the Gaussian kernel}
\label{sec:bd_Gau_ene_fund_sol}

We provide a lower bound of the integral of the heat kernel in Theorem~\ref{thm:int_heat_kernel_ene}
by the following lemma, whose proof is provided in Section~\ref{sec:proofs}.

\begin{lem}
\label{thm:ineq_int_heat_fund_sol}
Let $\Omega \subset \mathbf{R}^{d}$ be a bounded region with $D := \mathop{\mathrm{diam}} \Omega < \infty$ and
let $x, y \in \Omega$ be disjoint points in the region. 
Then, for any $t$ with $t \geq D^{2}/d$,  we have
\begin{align}
\int_{0}^{t} \frac{1}{(4\pi s)^{d/2}} \, \exp \left( -\frac{\| x - y \|^{2}}{4s} \right) \, \mathrm{d}s
\geq
\frac{1}{(4\pi)^{d/2}} 
\left[
\frac{d^{d/2-1}}{2D^{d-2}} \, \exp \left( -\frac{d \, \| x - y \|^{2}}{4D^{2}} \right) 
+
h_{d,D}(t)
\right],
\label{eq:lb_of_int_heat_kernel}
\end{align}
where 
\begin{align}
h_{d,D}(t)
:=
\begin{cases}
\displaystyle
\frac{\mathrm{e}^{-d/4}}{1-d/2}
\left(
\frac{1}{t^{d/2-1}} - \frac{d^{d/2-1}}{D^{d-2}}
\right) & (d \neq 2), \\[12pt]
\displaystyle
\mathrm{e}^{-1/2}
\log \left(
\frac{2t}{D^{2}} 
\right) &  (d = 2).
\end{cases}
\label{eq:lb_tail_of_int_heat_kernel}
\end{align}
\end{lem}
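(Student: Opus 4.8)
The plan is to factor the constant $(4\pi)^{-d/2}$ out of the integral and to split the remaining integral at the threshold $s_0 := D^2/d$. Writing $r := \|x-y\|$ (so that $0 < r \le D$, since $x,y \in \Omega$) and $f(s) := s^{-d/2}\exp\!\left(-r^2/(4s)\right)$, I would record the decomposition
\[
\int_0^t \frac{1}{(4\pi s)^{d/2}}\exp\!\left(-\frac{r^2}{4s}\right)\mathrm{d}s
= \frac{1}{(4\pi)^{d/2}}\left( \int_0^{s_0} f(s)\,\mathrm{d}s + \int_{s_0}^{t} f(s)\,\mathrm{d}s \right).
\]
The threshold $s_0$ is chosen precisely so that the two pieces reproduce, respectively, the Gaussian term and the tail term $h_{d,D}(t)$ in the claimed bound.

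For the tail piece $\int_{s_0}^t f$, I would use that $s \ge s_0$ on the range of integration, whence $\frac{r^2}{4s} \le \frac{r^2}{4s_0} = \frac{d\,r^2}{4D^2} \le \frac{d}{4}$ because $r \le D$; therefore $\exp(-r^2/(4s)) \ge \mathrm{e}^{-d/4}$. Pulling this constant out leaves $\mathrm{e}^{-d/4}\int_{s_0}^t s^{-d/2}\,\mathrm{d}s$, an elementary integral whose evaluation gives exactly $h_{d,D}(t)$ — a power law for $d \ne 2$ and a logarithm for $d = 2$, which is the only place the case distinction enters.

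The heart of the argument is the piece $\int_0^{s_0} f$, where I claim $\int_0^{s_0} f(s)\,\mathrm{d}s \ge \tfrac{1}{2}\, s_0\, f(s_0)$, and a short computation using $s_0 = D^2/d$ identifies $\tfrac{1}{2} s_0 f(s_0)$ with the Gaussian term $\frac{d^{d/2-1}}{2D^{d-2}}\exp(-d\,r^2/(4D^2))$. To prove the claim I would examine $\log f(s) = -\frac{d}{2}\log s - \frac{r^2}{4s}$, whose derivative vanishes only at $s_{\ast} = r^2/(2d)$, a maximum. Since $r \le D$ forces $s_{\ast} = r^2/(2d) \le D^2/(2d) = s_0/2$, the function $f$ is monotonically decreasing on $[s_0/2,\, s_0]$, so $f(s) \ge f(s_0)$ there. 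Discarding the (nonnegative) contribution on $[0, s_0/2]$ and bounding $f$ below by $f(s_0)$ on $[s_0/2, s_0]$ yields $\int_0^{s_0} f \ge \int_{s_0/2}^{s_0} f(s_0)\,\mathrm{d}s = \tfrac{s_0}{2} f(s_0)$.

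The main obstacle is recognizing the correct threshold $s_0 = D^2/d$ and observing that the hypothesis $r \le D$ pins the peak $s_{\ast}$ at or below $s_0/2$; this is exactly what makes the crude ``retain only the right half of the rectangle'' estimate recover both the constant $\tfrac12$ and the sharp exponent in the Gaussian term. Once this geometric observation is secured, both pieces are routine one-variable calculus, and summing them and restoring the factor $(4\pi)^{-d/2}$ completes the proof.
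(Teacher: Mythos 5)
Your proof is correct and is essentially the paper's own argument in different notation: your split point $s_0 = D^2/d$ is the paper's $2s_\ast$, your lower bound $\int_{s_0/2}^{s_0} f \ge \tfrac{s_0}{2}f(s_0)$ is exactly the paper's $\int_{s_\ast}^{2s_\ast} g \ge s_\ast g(2s_\ast)$ obtained from the same unimodality observation (peak at $\|x-y\|^2/(2d) \le s_0/2$), and your tail estimate $\exp(-r^2/(4s)) \ge \mathrm{e}^{-d/4}$ on $[s_0,t]$ matches the paper's treatment of $\int_{2s_\ast}^t$. No gaps; the two proofs coincide step for step.
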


\begin{rem}
The function $h_{d,D}$ in~\eqref{eq:lb_tail_of_int_heat_kernel} satisfies $h_{d,D}(t) \geq 0$ for any $t$ with $t \geq D^{2}/d$.  
\end{rem}

By combining Theorem~\ref{thm:int_heat_kernel_ene} and Lemma~\ref{thm:ineq_int_heat_fund_sol}, 
we have the following estimate of the energy of the Gaussian kernel. 

\begin{thm}
\label{thm:Gau_ene_leq_fund_sol_ene}
Let $\Omega \subset \mathbf{R}^{d}$ be a bounded region with $D := \mathop{\mathrm{diam}} \Omega < \infty$. 
Let $\{ x_{i} \}_{i=1}^{N} \subset \Omega$ be a set of disjoint points in $\Omega$ and 
let $\mu_{N}$ be the discrete measure given by~\eqref{eq:def_disc_meas}. 
Then, for any $t$ with $t \geq D^{2}/d$ and $a$ with $a \geq \sqrt{d}/(2D)$,  we have
\begin{align}
& \frac{1}{N^{2}} \sum_{i \neq j} 
\exp \left( -a^{2} \| x_{i} - x_{j} \|^{2} \right) 
\notag \\
& \leq
\frac{2(4\pi)^{d/2}D^{d-2}}{d^{d/2-1}}
\left(
- \frac{1}{N^{2}} \sum_{i \neq j} G_{d}(x_{i}, x_{j}) + A_{d}(t, \mu_{N}) - \frac{C_{d}(t)}{N} - \frac{1}{(4\pi)^{d/2}} \frac{N-1}{N} h_{d,D}(t)
\right),
\label{eq:bd_of_Gau_ene}
\end{align}
where 
$A_{d}(t, \mu_{N})$ and $C_{d}(t)$ are given by \eqref{eq:def_A_d_t_mu} and~\eqref{eq:def_C_d_t}, respectively. 
\end{thm}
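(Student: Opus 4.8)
The plan is to assemble the claim directly from Theorem~\ref{thm:int_heat_kernel_ene} and Lemma~\ref{thm:ineq_int_heat_fund_sol}; no new analytic input is needed beyond these two results. First I would apply Lemma~\ref{thm:ineq_int_heat_fund_sol} to each ordered pair $(x_i, x_j)$ with $i \neq j$ (legitimate since the points are disjoint, all lie in $\Omega$, and the hypothesis $t \geq D^{2}/d$ matches the range required by the lemma), then sum over all such pairs and divide by $N^{2}$. This yields
\begin{align}
\frac{1}{N^{2}} \sum_{i \neq j} \int_{0}^{t} \frac{1}{(4\pi s)^{d/2}} \exp\left( -\frac{\|x_{i} - x_{j}\|^{2}}{4s} \right) \mathrm{d}s
\geq
\frac{1}{(4\pi)^{d/2}} \frac{d^{d/2-1}}{2 D^{d-2}} \cdot \frac{1}{N^{2}} \sum_{i \neq j} \exp\left( -\frac{d \|x_{i} - x_{j}\|^{2}}{4 D^{2}} \right)
+ \frac{1}{(4\pi)^{d/2}} \frac{N-1}{N} h_{d,D}(t),
\notag
\end{align}
where the last term uses that there are exactly $N(N-1)$ ordered pairs with $i \neq j$, so $\frac{1}{N^{2}}\sum_{i\neq j} 1 = \frac{N-1}{N}$.

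Next I would substitute the left-hand side above using the exact identity of Theorem~\ref{thm:int_heat_kernel_ene}, replacing it by $A_{d}(t,\mu_{N}) - C_{d}(t)/N - \frac{1}{N^{2}}\sum_{i\neq j} G_{d}(x_{i},x_{j})$. Rearranging to isolate the Gaussian double sum and multiplying through by the positive constant $\frac{2(4\pi)^{d/2} D^{d-2}}{d^{d/2-1}}$, which exactly cancels the prefactor in front of that sum, I obtain
\begin{align}
\frac{1}{N^{2}} \sum_{i \neq j} \exp\left( -\frac{d \|x_{i} - x_{j}\|^{2}}{4 D^{2}} \right)
\leq
\frac{2(4\pi)^{d/2} D^{d-2}}{d^{d/2-1}}
\left(
- \frac{1}{N^{2}} \sum_{i \neq j} G_{d}(x_{i}, x_{j}) + A_{d}(t,\mu_{N}) - \frac{C_{d}(t)}{N} - \frac{1}{(4\pi)^{d/2}} \frac{N-1}{N} h_{d,D}(t)
\right).
\notag
\end{align}
Care is needed here only in tracking the direction of the inequality, since the term $-\frac{1}{N^{2}}\sum_{i\neq j} G_{d}(x_{i},x_{j})$ carries a sign and must be moved across correctly.

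Finally, I would invoke the hypothesis $a \geq \sqrt{d}/(2D)$, equivalently $a^{2} \geq d/(4D^{2})$. Since $\|x_{i} - x_{j}\|^{2} \geq 0$, this gives the pointwise bound $\exp(-a^{2}\|x_{i} - x_{j}\|^{2}) \leq \exp\left(-\frac{d}{4D^{2}}\|x_{i}-x_{j}\|^{2}\right)$ for every pair, hence $\frac{1}{N^{2}}\sum_{i\neq j}\exp(-a^{2}\|x_{i}-x_{j}\|^{2}) \leq \frac{1}{N^{2}}\sum_{i\neq j}\exp\left(-\frac{d\|x_{i}-x_{j}\|^{2}}{4D^{2}}\right)$. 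Chaining this with the previous display yields exactly~\eqref{eq:bd_of_Gau_ene}. I do not expect a genuine obstacle: the argument is an assembly of the two quoted results, and the only points demanding attention are the bookkeeping of the multiplicative constant $\frac{2(4\pi)^{d/2}D^{d-2}}{d^{d/2-1}}$ and the elementary monotonicity step in $a$, which is the sole place where the assumption $a \geq \sqrt{d}/(2D)$ is used.
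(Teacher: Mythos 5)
Your proof is correct and follows essentially the same route as the paper: both arguments consist of chaining Lemma~\ref{thm:ineq_int_heat_fund_sol} (summed over ordered pairs, with the $\frac{N-1}{N}$ bookkeeping), the exact identity of Theorem~\ref{thm:int_heat_kernel_ene}, and the pointwise monotonicity $\exp(-a^{2}r^{2}) \leq \exp\bigl(-\tfrac{d}{4D^{2}}r^{2}\bigr)$ from $a \geq \sqrt{d}/(2D)$. The only difference is ordering — the paper applies the monotonicity in $a$ at the start of its chain of inequalities while you apply it at the end — which is immaterial.
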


\begin{proof}
It follows from 
inequality~\eqref{eq:lb_of_int_heat_kernel} in Lemma~\ref{thm:ineq_int_heat_fund_sol} 
and equality~\eqref{eq:ene_Gau_and_fund_sol} in Theorem~\ref{thm:int_heat_kernel_ene}
that
\begin{align}
& \frac{1}{N^{2}} \sum_{i \neq j} 
\frac{1}{(4\pi)^{d/2}} 
\frac{d^{d/2-1}}{2D^{d-2}} \, \exp \left( -a^{2} \| x_{i} - x_{j} \|^{2} \right) 
\notag \\
& \leq \frac{1}{N^{2}} \sum_{i \neq j} 
\frac{1}{(4\pi)^{d/2}} 
\frac{d^{d/2-1}}{2D^{d-2}} \, \exp \left( -\frac{d \, \| x_{i} - x_{j} \|^{2}}{4D^{2}} \right) 
\notag \\
& \leq
\frac{1}{N^{2}} \sum_{i \neq j} 
\int_{0}^{t} \frac{1}{(4\pi s)^{d/2}} \, \exp \left( -\frac{\| x_{i} - x_{j} \|^{2}}{4s} \right) \, \mathrm{d}s
- 
\frac{1}{(4\pi)^{d/2}} \frac{N-1}{N} h_{d,D}(t)
\notag \\
& = 
A_{d}(t, \mu_{N}) - \frac{C_{d}(t)}{N} 
- \frac{1}{N^{2}} \sum_{i \neq j} G_{d}(x_{i}, x_{j})
- \frac{1}{(4\pi)^{d/2}} \frac{N-1}{N} h_{d,D}(t).
\notag
\end{align}
Thus we have the conclusion. 
\end{proof}

In the parenthesis of the RHS of \eqref{eq:bd_of_Gau_ene}, 
the second term $A_{d}(t, \mu_{N})$ also depends on the set $\{ x_{i} \}_{i=1}^{N}$.
However, 
we can expect that the dependence tends to disappear as $t \to \infty$. 
Therefore minimization of 
\begin{align}
- \frac{1}{N^{2}} \sum_{i \neq j} G_{d}(x_{i}, x_{j}) 
\notag
\end{align}
will provide approximate minimizer of the energy of the Gaussian kernel. 

\section{Approximate minimization of the worst case error by a point-wise gradient descent method}
\label{sec:approx_min_wce}

In this section, 
we present a method for generating points for quadrature
based on the arguments in Section~\ref{sec:bd_of_wce_by_fund_sol}. 
Then, 
based on the relation in~\eqref{eq:opt_w_leq_equi_w},
we compute the optimal weights by using Formula~\eqref{eq:opt_weights}
to obtain a quadrature formula. 

\subsection{Objective functions}

According to Theorem~\ref{thm:Gau_ene_leq_fund_sol_ene}, 
we can provide an upper bound of the value in~\eqref{eq:WCE_two_terms}
in the case of the Gaussian kernel 
\(
\displaystyle
K(x,y) = 
\exp \left(
- a^{2} \| x - y \|^{2}
\right)
\)
with $a \geq \sqrt{d}/(2D)$.
That is, the value
\begin{align}
& - \frac{2}{N} \sum_{j=1}^{N} \int_{\Omega} K(x_{j}, x) \, \mathrm{d}\mu(x)
- \frac{\hat{C}_{d,D}}{N^{2}} \sum_{i \neq j}G_{d}(x_{i}, x_{j})
\notag \\
& \qquad + 
\underline{\hat{C}_{d,D} 
\left( 
A_{d}(t, \mu_{N}) - \frac{C_{d}(t)}{N} - \frac{1}{(4\pi)^{d/2}} \frac{N-1}{N} h_{d,D}(t)
\right)}
\label{eq:ub_WCE_two_terms}
\end{align}
is such an upper bound, where
\begin{align}
\hat{C}_{d,D}
:=
\frac{2(4\pi)^{d/2}D^{d-2}}{d^{d/2-1}}. 
\notag
\end{align}
Since the underlined part of~\eqref{eq:ub_WCE_two_terms}
is almost independent of the set $\{ x_{j} \}_{j=1}^{N}$, 
we minimize the other part of~\eqref{eq:ub_WCE_two_terms}
to obtain an approximate minimizer of the worst case error. 
In the following, 
we deal with the Gaussian kernel $K(x,y) = \exp \left( - \| x - y \|^{2} \right)$. 
That is, we consider the case that $a = 1$ in Theorem~\ref{thm:Gau_ene_leq_fund_sol_ene}. 
Let $J_{1}(x)$ be defined by
\begin{align}
J_{1}(x) 
:= 
\int_{0}^{1} \exp(- |x - y|^{2}) \, \mathrm{d}x 
= 
\frac{\sqrt{\pi}}{2} 
\left(
\mathop{\mathrm{erf}}(1-x)
+
\mathop{\mathrm{erf}}(x)
\right).
\notag
\end{align}
In the following, we consider the two and three dimensional cases.

\subsubsection{Two dimensional case: $d = 2$}

We consider a region $\Omega = [0,1]^{2}$ and measure $\mathrm{d}\mu(x) = \mathrm{d}x$. 
Then, we have $D = \sqrt{2}$ and $\hat{C}_{d,D} = 8 \pi$, 
and we can confirm that $a = 1 \geq 1/2 = \sqrt{d}/(2D)$. 
Furthermore, we have
\begin{align}
\int_{\Omega} K(x, y) \, \mathrm{d}\mu(y)
= J_{1}(x^{(1)}) J_{1}(x^{(2)}) 
=: J_{2}(x), 
\label{eq:def_Kint_2D}
\end{align}
where $x^{(1)}$ and $x^{(2)}$ are the first and second components of $x \in [0,1]^{2}$, 
respectively. 
From these and the two-dimensional fundamental solution in~\eqref{eq:fund_sol_Laplace}, 
the objective function in~\eqref{eq:ub_WCE_two_terms} is written in the form
\begin{align}
I_{2}(x_{1}, \ldots, x_{N})
:=
- \frac{2}{N} \sum_{j=1}^{N} J_{2}(x_{j})
+ \frac{4}{N^{2}} \sum_{i \neq j} \log \frac{1}{\| x_{i} - x_{j} \|}.
\label{eq:log_obj_func_2D}
\end{align}

\subsubsection{Three dimensional case: $d = 3$}

We consider a region $\Omega = [0,1]^{3}$ and measure $\mathrm{d}\mu(x) = \mathrm{d}x$. 
Then, we have $D = \sqrt{3}$ and $\hat{C}_{d,D} = 16 \pi^{3/2}$, 
and we can confirm that $a = 1 \geq 1/2 = \sqrt{d}/(2D)$. 
Furthermore, we have
\begin{align}
\int_{\Omega} K(x, y) \, \mathrm{d}\mu(y)
= J_{1}(x^{(1)}) J_{1}(x^{(2)}) J_{1}(x^{(3)}) 
=: J_{3}(x), 
\label{eq:def_Kint_3D}
\end{align}
where $x^{(1)}$, $x^{(2)}$ and $x^{(3)}$ are the first, second and third components of $x \in [0,1]^{3}$, 
respectively. 
From these and the three-dimensional fundamental solution in~\eqref{eq:fund_sol_Laplace}, 
the objective function in~\eqref{eq:ub_WCE_two_terms} is written in the form
\begin{align}
I_{3}(x_{1}, \ldots, x_{N})
:=
- \frac{2}{N} \sum_{j=1}^{N} J_{3}(x_{j})
+ \frac{2\sqrt{\pi}}{N^{2}} \sum_{i \neq j} \frac{1}{\| x_{i} - x_{j} \|}.
\label{eq:log_obj_func_3D}
\end{align}

\begin{rem}
The functions $I_{2}$ and $I_{3}$ are similar to the Riesz energies 
(see e.g.~\cite{bib:BrauGrab_PointSp_2015}), 
which have an intrinsic repelling property. 
As shown in Section~\ref{sec:num_exper}, 
this property seems to be well-suited to 
a method introduced in Section~\ref{sec:PWGD}. 
Similar energies are considered and different algorithms are applied to them in 
\cite{bib:Joseph_etal_min_ene_2015, bib:Joseph_etal_min_ene_2019}.
\end{rem}

\subsection{Regularization terms}

In the functions $I_{2}$ and $I_{3}$, 
the sums of $J_{2}(x_{j})$ and $J_{3}(x_{j})$ take roles as regularization terms, respectively. 
However, 
it was observed that their effects were so weak that 
minimization of $I_{2}$ and $I_{3}$ made points $x_{j}$ accumulate near the boundary of $\Omega$. 
Since such distribution is not appropriate for quadrature on $\Omega$, 
we introduce stronger regularization terms. 

To this end, 
we begin with approximating the characteristic functions of the regions $[0,1]^{2}$ and $[0,1]^{3}$:
\begin{align}
\delta_{[0,1]^{d}}(x) 
:=
\begin{cases}
0 & (x \in [0,1]^{d}), \\
\infty & (x \not \in [0,1]^{d}),
\end{cases}
\qquad (d = 2,3).
\notag
\end{align}
There are various choices about such approximate functions. 
In this paper, 
by using a real number $M>0$, we choose 
\begin{align}
& \tilde{\delta}_{M}^{(2)}(x) 
:=
\sum_{\ell =1}^{2} \left( \log \frac{1}{ x^{(\ell)} - M } + \log \frac{1}{ 1 + M - x^{(\ell)} } \right)
\qquad \text{and}
\notag \\
& \tilde{\delta}_{M}^{(3)}(x) 
:=
\sum_{\ell =1}^{3} \left( \frac{1}{ x^{(\ell)} - M } + \frac{1}{ 1 + M - x^{(\ell)} } \right)
\notag
\end{align}
for $\delta_{[0,1]^{2}}(x)$ and $\delta_{[0,1]^{3}}(x)$, respectively. 
The number $M$ sets a margin of the boundaries of the regions. 
By using these, we introduce
\begin{align}
& R_{2}(x_{1}, \ldots, x_{N}) :=
\frac{1}{N^{P}} \sum_{i=1}^{N} \tilde{\delta}_{M}^{(2)}(x_{i}) 
\qquad \text{and}
\label{eq:reg_term_dim2} \\
& R_{3}(x_{1}, \ldots, x_{N}) :=
\frac{1}{N^{P}} \sum_{i=1}^{N} \tilde{\delta}_{M}^{(3)}(x_{i})
\label{eq:reg_term_dim3}
\end{align}
as regularization terms in the cases of $d=2$ and $d=3$, respectively. 
The parameter $P$ determines strength of these terms. 

We generate a set $\{ x_{j} \}$ of points by minimizing the functions 
\begin{align}
I_{d}(x_{1}, \ldots, x_{N}) + R_{d}(x_{1}, \ldots, x_{N}) 
\qquad
(d = 2,3).
\notag
\end{align}
The hyper-parameters $P$ and $M$ are chosen
so that good distribution of points are obtained by the algorithm proposed below in Section~\ref{sec:PWGD}. 

\begin{rem}
The functions $\tilde{\delta}_{M}^{(2)}$ and $\tilde{\delta}_{M}^{(3)}$ are not based on any theory, 
although they imitates the corresponding fundamental solutions. 
In addition, 
the factor $N^{-P}$ in~\eqref{eq:reg_term_dim2} and~\eqref{eq:reg_term_dim3} 
may have room for improvement. 
Finding more appropriate regularization terms based on a theory is a topic for future work.
\end{rem}

\begin{rem}
In \cite{bib:Lu_etal_quad_heat_2020}, 
the authors use the heat kernels, 
which are time-dependent Gaussian kernels in a special case, 
to generate points on compact manifolds
via simulated annealing. 
They show superiority of the points given by the heat kernels 
over those given by the Riesz kernels and other QMC sequences. 
In Section~\ref{sec:num_exper} of this paper,
we observe that the functions $I_{d}$ with the regularization terms $R_{d}$
can be superior to the functions of the worst case error for the Gaussian kernel. 
\end{rem}

\subsection{Point-wise gradient descent method (PWGD)}
\label{sec:PWGD}

Here
we propose an algorithm for generating a set $\{ x_{j} \}$ of points by using the function $I_{d} + R_{d}$. 
We intend to realize a so simple algorithm with cheap computational cost
that it can be applied to high-dimensional cases with many points. 
To this end, 
it is better to avoid preparation of many candidate points in $\Omega$ 
among which points for quadrature are selected. 

Taking these considerations into account, 
we propose Algorithm~\ref{alg:PWGD} for generating points for quadrature. 
We call it a point-wise gradient descent method (PWGD). 
After preparing an initial set $\{ x_{j} \}_{j=1}^{N} \subset \Omega$, 
the algorithm updates its member one by one with a gradient of the function $I_{d} + R_{d}$ with respect to the member. 

\begin{algorithm}[ht]
\caption{Point-wise gradient descent method (PWGD)}
\label{alg:PWGD}
\begin{algorithmic}[1]
\REQUIRE a number $N$ of points, a default step size $\gamma > 0$
\ENSURE a set $\{ x_{i} \}_{i=1}^{N} \subset \Omega$ of points
\STATE generate an initial set $\{ x_{i} \}_{i=1}^{N} \subset \Omega$ randomly
\FOR {$k=1,\ldots, K_{\max}$}
	\FOR {$i=1,\ldots, N$}
		\STATE $ \displaystyle g_{i} := \nabla_{x_{i}} \left\{ I_{d}(x_{1},\ldots, x_{N}) + R_{d}(x_{1},\ldots, x_{N}) \right\} $	
		\STATE $\gamma' := \max \{ \beta \geq 0 \mid x_{i} - \beta g_{i} \in \Omega \}$
		\STATE $\gamma \leftarrow \max \{ \gamma, \gamma' \}$
		\STATE $ \displaystyle x_{i} \leftarrow x_{i} - \gamma g_{i}$ 	
	\ENDFOR
	\IF {$\displaystyle \max_{1 \leq i \leq N} \| g_{i} \| < \epsilon$} 
		\STATE \textbf{break}
	\ENDIF
\ENDFOR
\RETURN  {$\{ x_{i} \}_{i=1}^{N}$}
\end{algorithmic} 
\end{algorithm}

Finally, we obtain a set $\{ (x_{j}, w_{j}^{\ast}) \}_{j=1}^{N}$ of points and weights for quadrature by the following procedure. 
\begin{enumerate}
\item
Obtain a set $\{ x_{j} \}_{j=1}^{N} \subset \Omega$ by Algorithm~\ref{alg:PWGD} (PWGD) with equal weights: $w_{j} = 1/N$.

\item 
Compute the optimal weights $\{ w_{j}^{\ast} \}_{j=1}^{N}$ by Formula~\eqref{eq:wce_optW}:
\(
\displaystyle
\boldsymbol{w}^{\ast} = 
\int_{\Omega}
\mathcal{K}_{\mathcal{X}_{N}}^{-1}\, \boldsymbol{k}_{\mathcal{X}_{N}}(x)
\, \mathrm{d}\mu(x)
\).

\end{enumerate}

\section{Numerical experiments}
\label{sec:num_exper}

We compute the sets $\{ (x_{j}, w_{j}^{\ast}) \}_{j=1}^{N}$ by the proposed procedure
in the cases that $K(x,y) = \exp(- \| x - y \|^{2})$ and $\Omega = [0,1]^{d}$ for $d = 2, 3$.
We set $\gamma = 1$ in Algorithm~\ref{alg:PWGD}
and choose the hyper-parameters $P$ and $M$ experimentally.  
For comparison, 
we also use other methods shown below:
\begin{itemize}
\item[\textbf{M1}]
Sequential Bayesian quadrature (SBQ), 

\item[\textbf{M2}]
Application of the point-wise gradient descent method 
to the worst-case error with the equal weights: 
$e^{\mathrm{wor}}(\mathcal{X}_{N}, \{ 1/N \}_{i=1}^{N}; \mathcal{H}_{K})$. 

\end{itemize}
In particular, 
we deal with the latter method 
to confirm the effect of the proposed method. 
We set $\gamma = 0.1$ in Algorithm~\ref{alg:PWGD}%
\footnote{
The default step size $\gamma$ in Algorithm~\ref{alg:PWGD} 
is determined experimentally. 
In most cases, 
we observed that the inner iteration was terminated 
before it reached the maximum number $K_{\max}$ of the iteration. 
}. 
In addition, 
we used $\varepsilon = 10^{-5}$ and $\varepsilon = 10^{-4}$
in Algorithm~\ref{alg:PWGD} 
in the cases of $d = 2$ and $d = 3$, respectively. 

MATLAB programs are used for all computation in this section. 
In addition, the computation is done with the double precision floating point numbers.
The programs used for the computation are available on the web page
\cite{bib:KTanaka_Matlab_2021}.

\subsection{Two dimensional case: $d = 2$}

First, we show the generated points in
Figures~\ref{fig:2D_50p_SBQ}--\ref{fig:2D_50p_PWGD_FS_060035}.
We can observe that 
the points given by the proposed procedure 
are separated each other and do not gather around a certain point
as opposed to those given by the other methods. 

\begin{figure}[H]
\begin{minipage}[t]{0.49\linewidth}
\includegraphics[width = \linewidth]{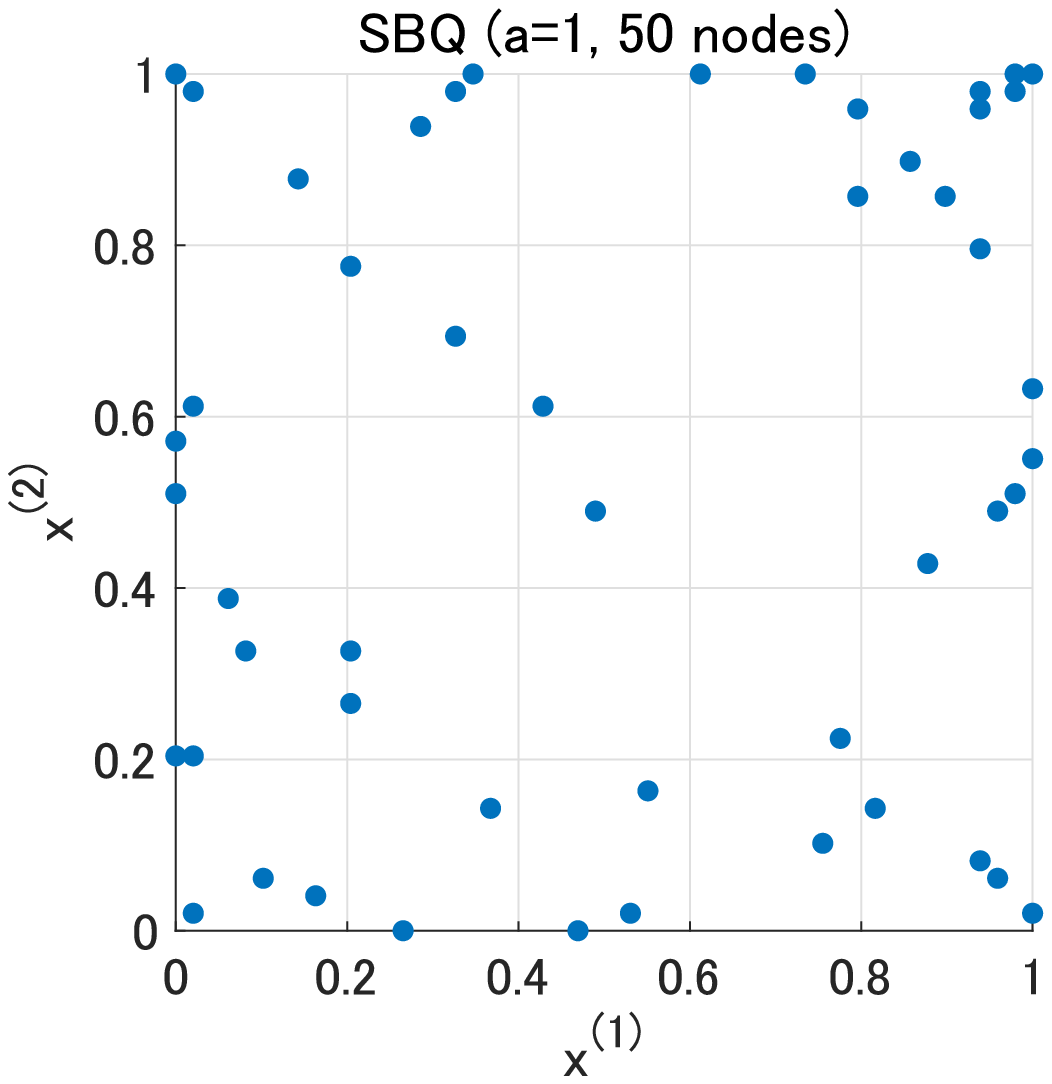}
\caption{$50$ points given by M1 (SBQ).}
\label{fig:2D_50p_SBQ}
\end{minipage}
\quad
\begin{minipage}[t]{0.49\linewidth}
\includegraphics[width = \linewidth]{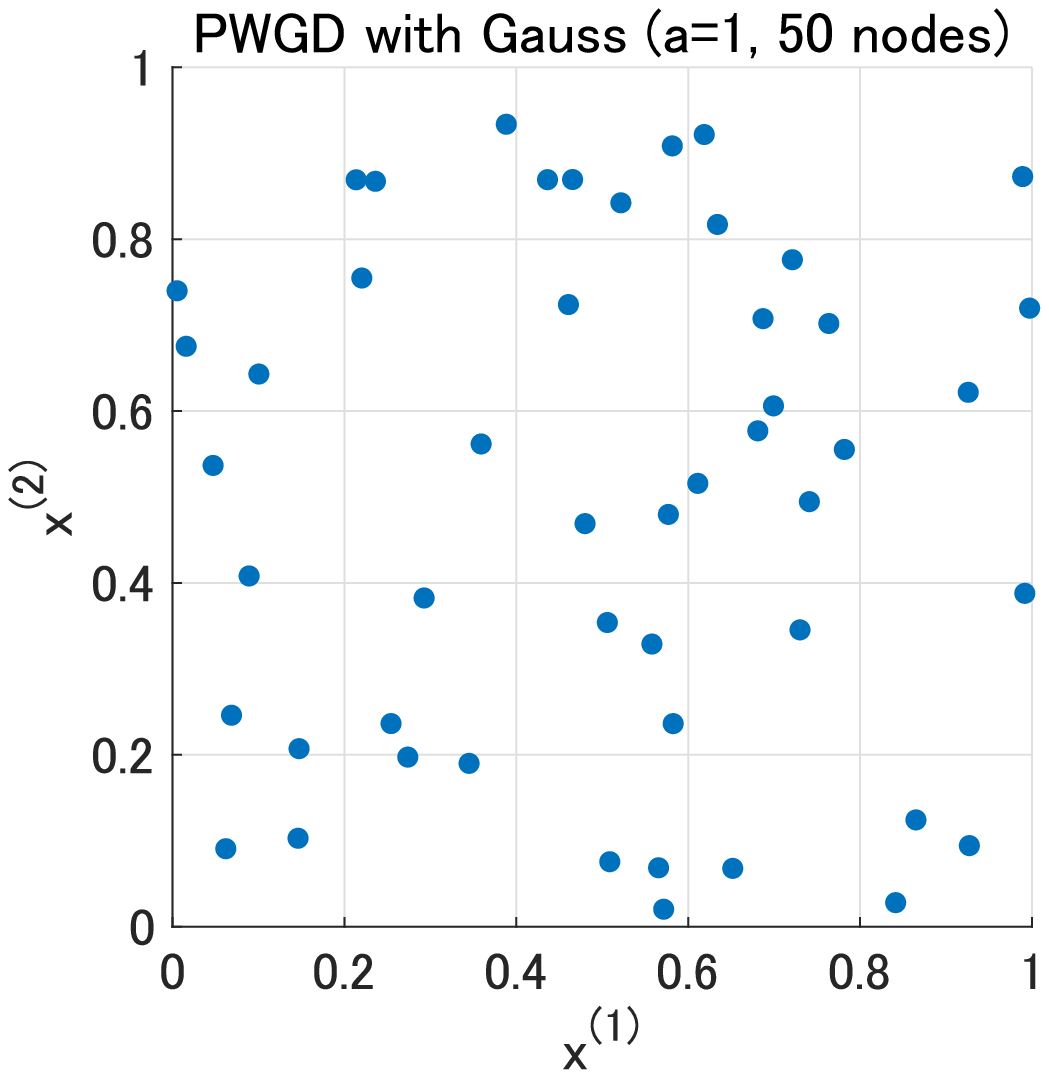}
\caption{$50$ points given by M2 (PWGD for the original worst-case error).}
\label{fig:2D_50p_PWGD_Gauss}
\end{minipage}
\end{figure}

\begin{figure}[H]
\begin{minipage}[t]{0.49\linewidth}
\includegraphics[width = \linewidth]{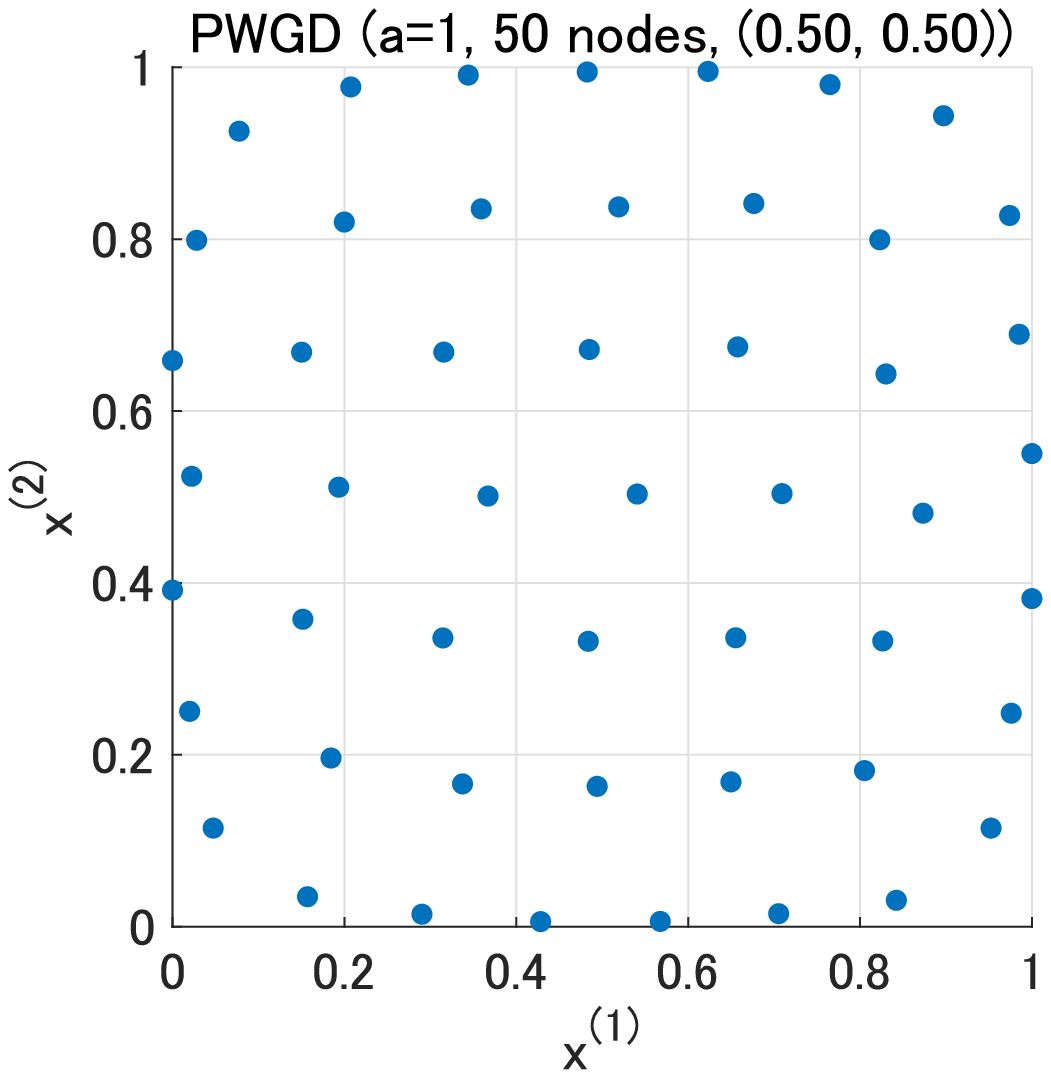}
\caption{$50$ points given by the proposed procedure with $(P,M) = (0.5, 0.5)$.}
\label{fig:2D_50p_PWGD_FS_050050}
\end{minipage}
\quad
\begin{minipage}[t]{0.49\linewidth}
\includegraphics[width = \linewidth]{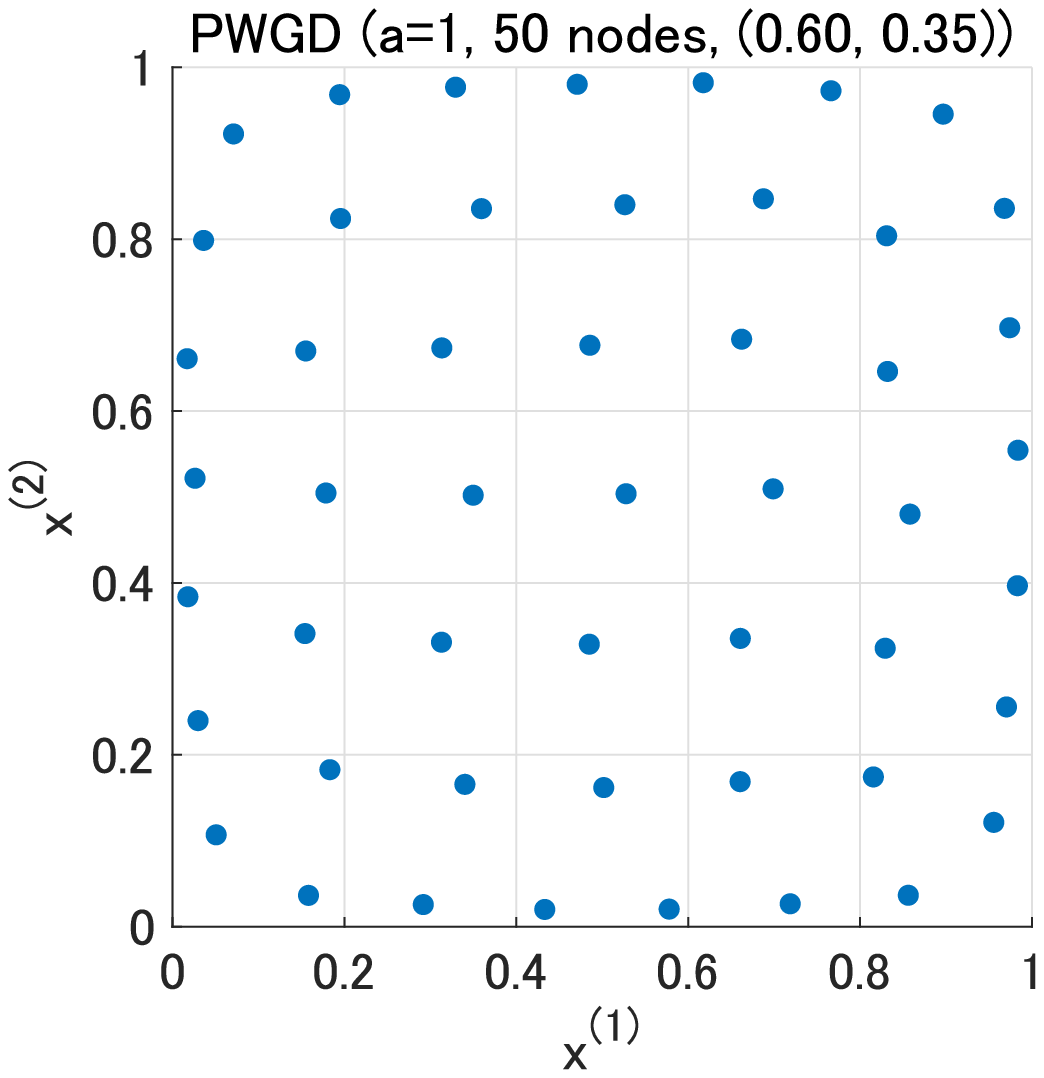}
\caption{$50$ points given by the proposed procedure with $(P,M) = (0.6, 0.35)$.}
\label{fig:2D_50p_PWGD_FS_060035}
\end{minipage}
\end{figure}

Next, 
in Figures~\ref{fig:2D_wce_P050} and~\ref{fig:2D_wce_P060}, 
we show the worst case errors 
for the points and weights given by methods M1, M2 and the proposed procedure. 
We can observe that 
the proposed procedure outperforms with the other methods
when the hyper-parameters $(P,M)$
are set appropriately according to $N$, the number of the points.

\begin{figure}[H]
\centering
\includegraphics[width = 0.6\linewidth]{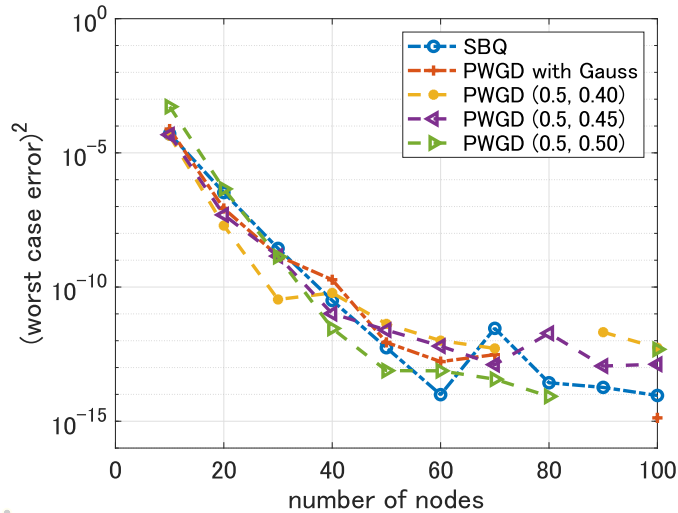}
\caption{Squared worst case errors. 
The horizontal axis corresponds to $N$. 
The legend ``PWGD with Gauss'' indicates method M2 and 
``PWGD $(P,M)$'' indicates the proposed procedure with the hyper-parameters $(P,M)$.
Here $P = 0.5$.}
\label{fig:2D_wce_P050}
\end{figure}

\begin{figure}[H]
\centering
\includegraphics[width = 0.6\linewidth]{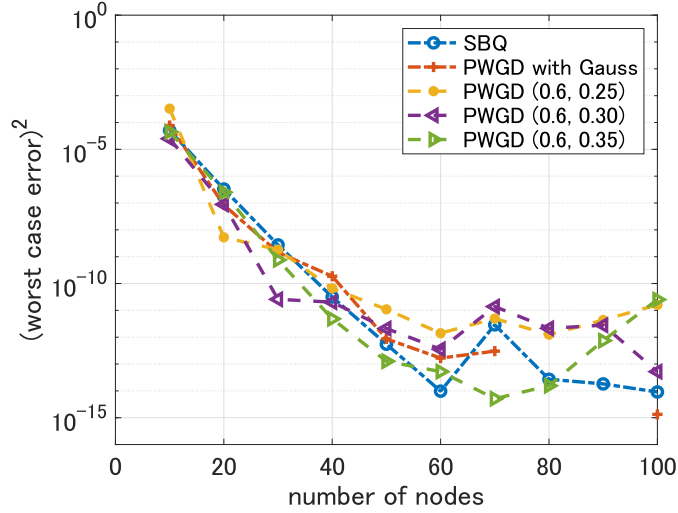}
\caption{Squared worst case errors.
The horizontal axis corresponds to $N$. 
The legend ``PWGD with Gauss'' indicates method M2 and 
``PWGD $(P,M)$'' indicates the proposed procedure with the hyper-parameters $(P,M)$.
Here $P = 0.6$.}
\label{fig:2D_wce_P060}
\end{figure}

\subsection{Three dimensional case: $d = 3$}

First, we show the generated points in
Figures~\ref{fig:3D_100p_SBQ}--\ref{fig:3D_100p_PWGD_FS_125012}. 
We can observe similar situations to the two-dimensional case. 

\begin{figure}[H]
\begin{minipage}[t]{0.49\linewidth}
\includegraphics[width = \linewidth]{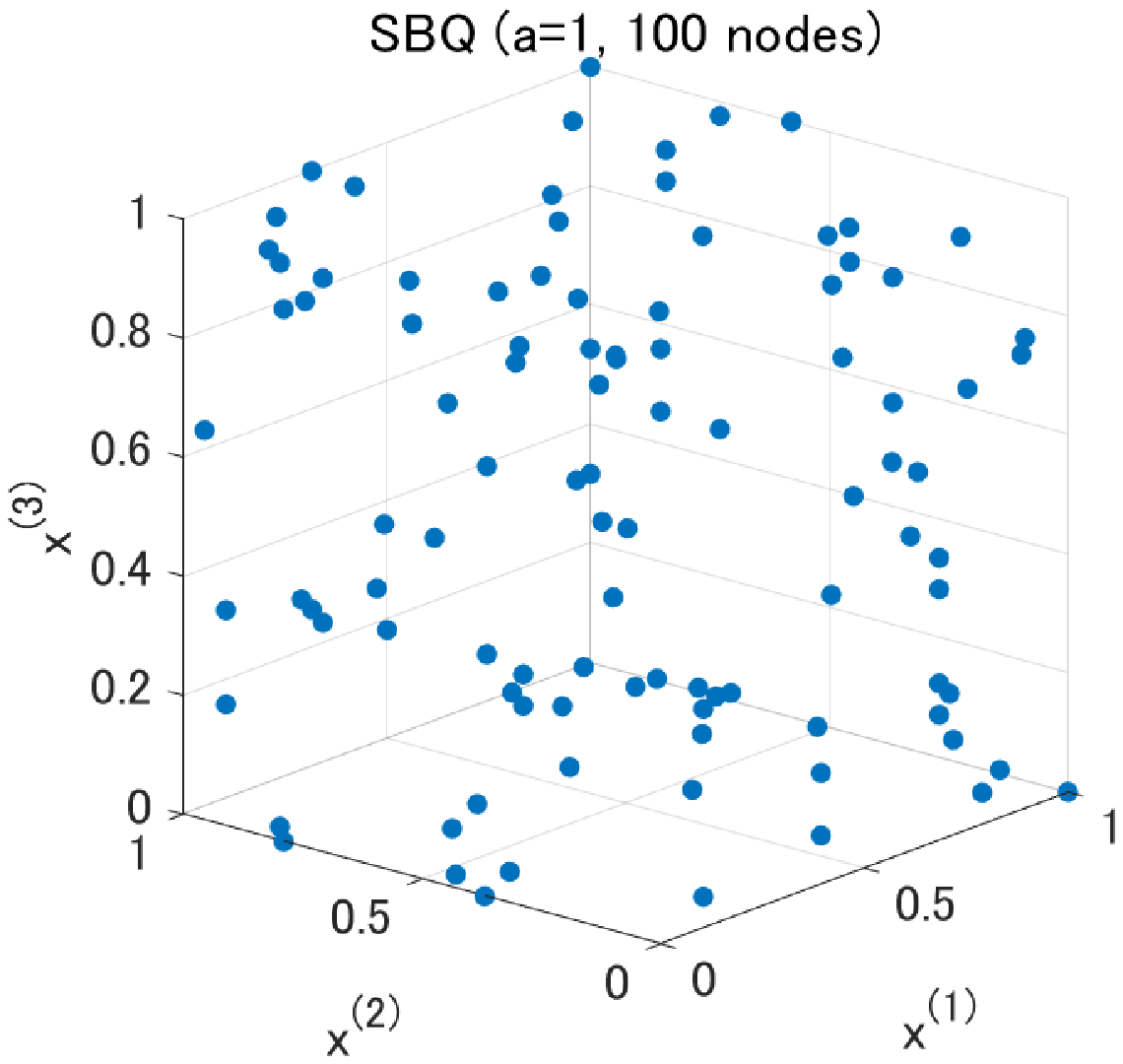}
\caption{$100$ points given by M1 (SBQ).}
\label{fig:3D_100p_SBQ}
\end{minipage}
\quad
\begin{minipage}[t]{0.49\linewidth}
\includegraphics[width = \linewidth]{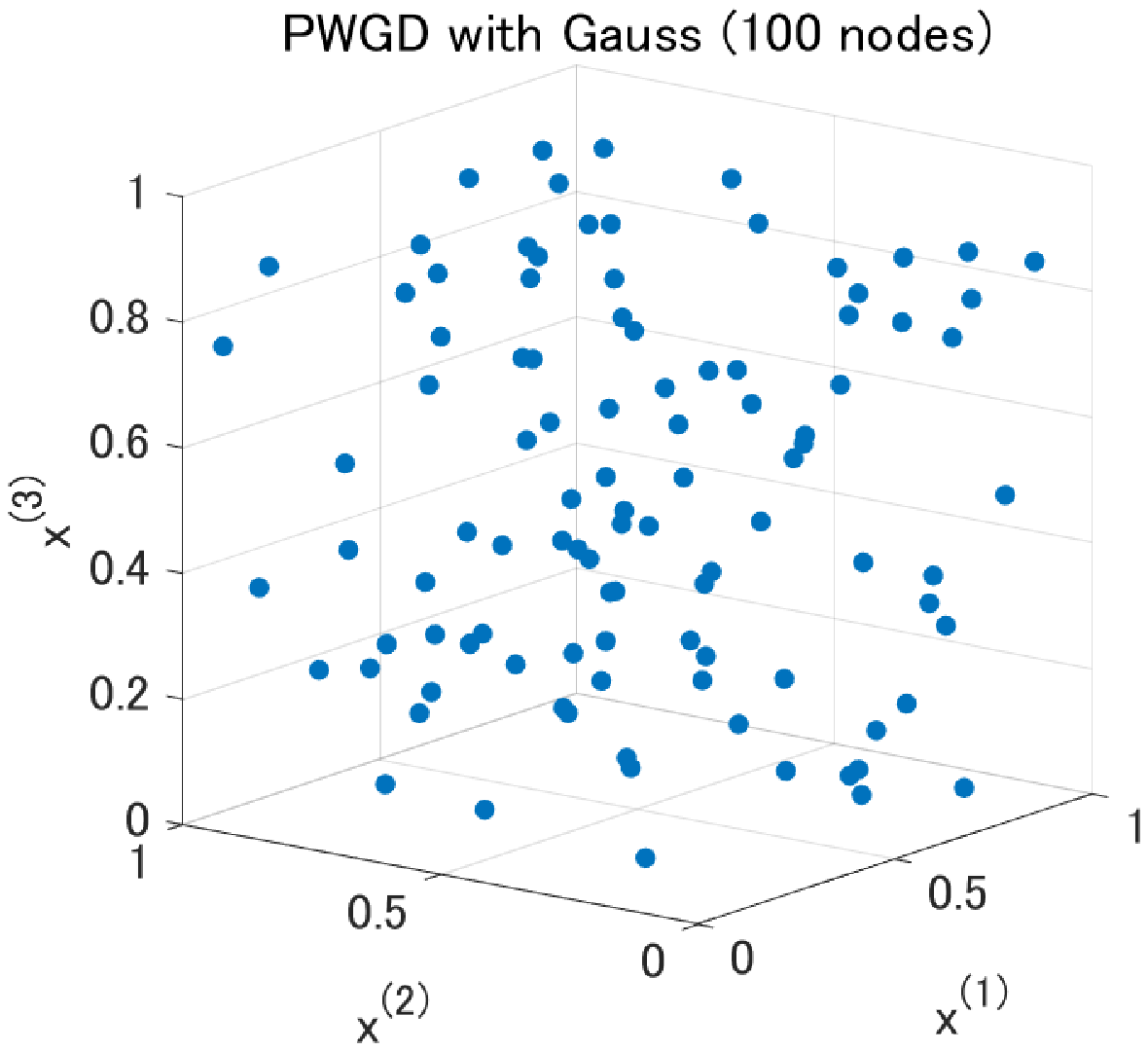}
\caption{$100$ points given by M2 (PWGD for the original worst-case error).}
\label{fig:3D_100p_PWGD_Gauss}
\end{minipage}
\end{figure}

\begin{figure}[H]
\begin{minipage}[t]{0.49\linewidth}
\includegraphics[width = \linewidth]{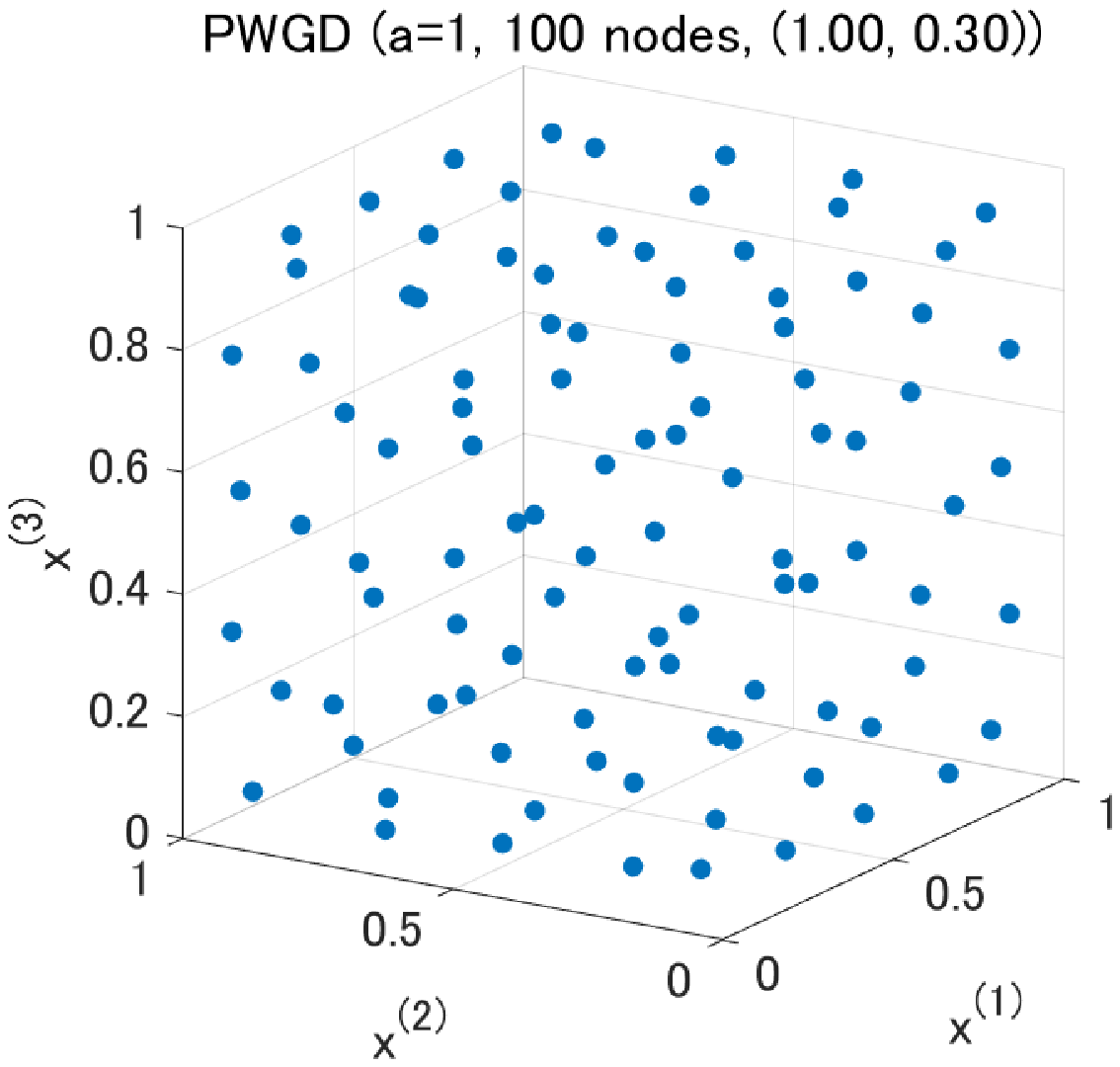}
\caption{$100$ points given by the proposed procedure with $(P,M) = (1.0, 0.3)$.}
\label{fig:3D_100p_PWGD_FS_100030}
\end{minipage}
\quad
\begin{minipage}[t]{0.49\linewidth}
\includegraphics[width = \linewidth]{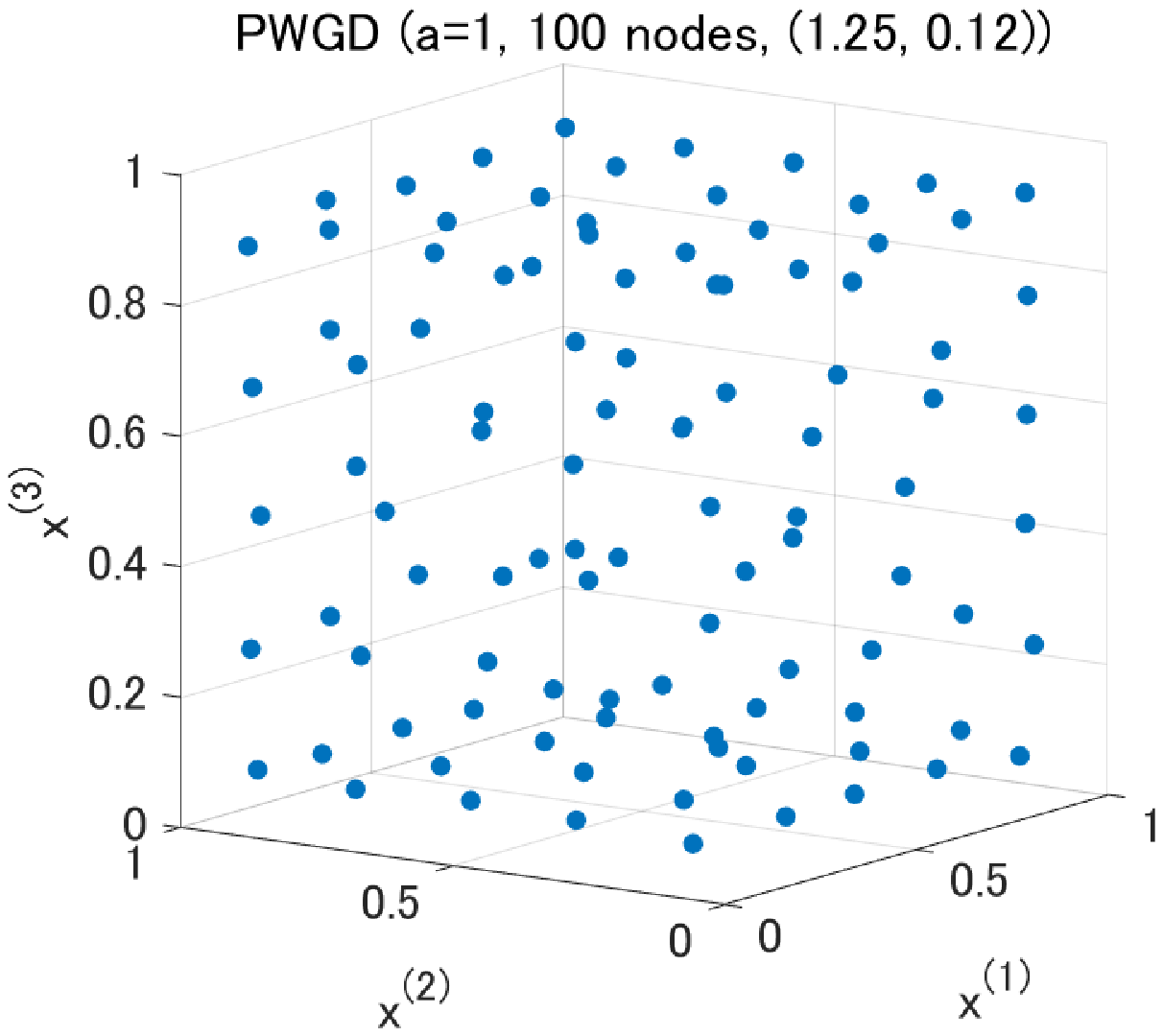}
\caption{$100$ points given by the proposed procedure with $(P,M) = (1.25, 0.12)$.}
\label{fig:3D_100p_PWGD_FS_125012}
\end{minipage}
\end{figure}

Next, 
in Figures~\ref{fig:3D_wce_P100} and~\ref{fig:3D_wce_P125},  
we show the worst case errors 
for the points and weights given by methods M1, M2 and the proposed procedure. 
We can observe that
the proposed procedure often outperforms method M2, 
which implies that the proposed function $I_{d} + R_{d}$ 
is well-suited to the point-wise gradient descent algorithm. 
On the other hand, 
the performance of the proposed procedure seems to be slightly worse than that of method M1 (SBQ), 
although the former outperforms the latter in some cases.

\begin{figure}[H]
\centering
\includegraphics[width = 0.6\linewidth]{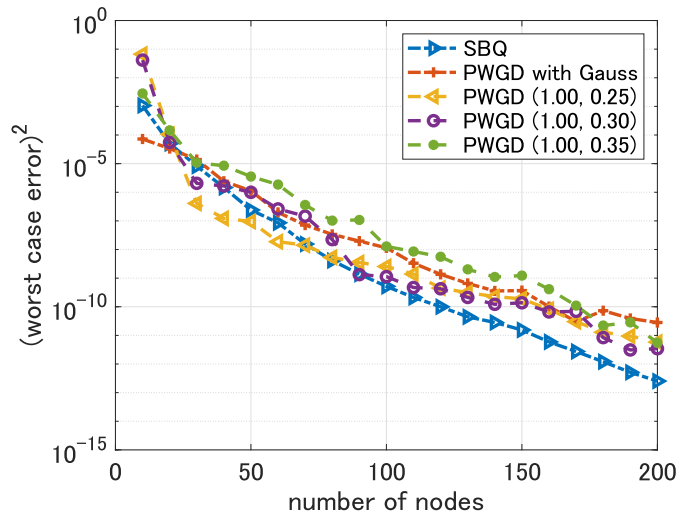}
\caption{Squared worst case errors.
The horizontal axis corresponds to $N$. 
The legend ``PWGD with Gauss'' indicates method M2 and 
``PWGD $(P,M)$'' indicates the proposed procedure with the hyper-parameters $(P,M)$.
Here $P = 1.0$.}
\label{fig:3D_wce_P100}
\end{figure}

\begin{figure}[H]
\centering
\includegraphics[width = 0.6\linewidth]{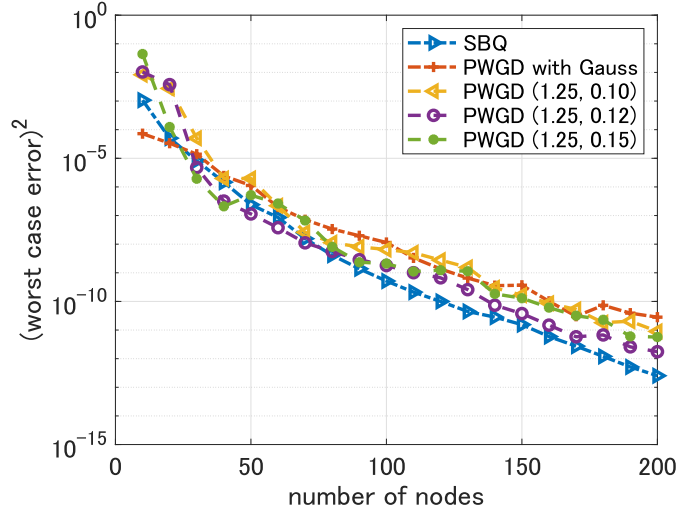}
\caption{Squared worst case errors. 
The horizontal axis corresponds to $N$. 
The legend ``PWGD with Gauss'' indicates method M2 and 
``PWGD $(P,M)$'' indicates the proposed procedure with the hyper-parameters $(P,M)$.
Here $P = 1.25$.}
\label{fig:3D_wce_P125}
\end{figure}

\section{Concluding remarks}
\label{sec:concl}

By using the fundamental solutions of the Laplacian on $\mathbf{R}$, 
we have provided the upper bound in~\eqref{eq:ub_WCE_two_terms} 
for the main terms in~\eqref{eq:WCE_two_terms} of the worst case error in an RKHSs with the Gaussian kernel. 
Based on the bound, 
we have proposed the objective functions with the regularization terms 
and Algorithm \ref{alg:PWGD} (PWGD) for generating points for quadrature in Section~\ref{sec:approx_min_wce}. 
Then, 
quadrature formulas are obtained by calculating the optimal weights with respect to the generated points. 
By the numerical experiments in Section~\ref{sec:num_exper}, 
we have observed that this procedure can outperform the SBQ and PWGD with the original worst case error
if the hyper-parameters are appropriate. 
We can guess that direct application of the PWGD to the original worst case error 
tends to make a set of points trapped in a bad local minimum.

We mention some topics for future work. 
Since 
the upper bound in~\eqref{eq:ub_WCE_two_terms} is loose 
and the regularization terms are artificial, 
it will be better to find discrete energies that is more suited to the PWGD. 
After that, theoretical guarantee of the performance of the PWGD should be required. 
Furthermore, we will consider generalization of such results to other kernels. 

\section*{Acknowledgements}

This work was partly supported by JST, PRESTO Grant Number JPMJPR2023, Japan.


\appendix

\section{Derivation of Formula~\eqref{eq:wce_optW}}
\label{sec:wce_optW}

Note that the right hand side of~\eqref{eq:WCE} is rewritten in the form
\begin{align}
\int_{\Omega} \int_{\Omega} K(x, y) \, \mathrm{d}x \mathrm{d}y
-2 \int_{\Omega} \boldsymbol{w}^{T} \, \boldsymbol{k}_{\mathcal{X}_{N}}(x) \, \mathrm{d}x
+ \boldsymbol{w}^{T} \, \mathcal{K}_{\mathcal{X}_{N}} \, \boldsymbol{w}
\label{eq:wce_RHKS_vec_mat}
\end{align}
by using $\boldsymbol{w} = (w_{1}, \ldots , w_{N})^{T}$. 
Then, 
by letting $\boldsymbol{w} = \boldsymbol{w}^{\ast}$ in~\eqref{eq:wce_RHKS_vec_mat}, 
we have
\begin{align}
& \int_{\Omega} \int_{\Omega} K(x, y) \, \mathrm{d}x \mathrm{d}y
-2 \int_{\Omega} \boldsymbol{w}^{T} \, \boldsymbol{k}_{\mathcal{X}_{N}}(x) \, \mathrm{d}x
+ \boldsymbol{w}^{T} \, \mathcal{K}_{\mathcal{X}_{N}}  \, \boldsymbol{w} \notag \\
& = 
\int_{\Omega} \int_{\Omega} K(x, y) \, \mathrm{d}x \mathrm{d}y
- \int_{\Omega} \int_{\Omega}
(\boldsymbol{k}_{\mathcal{X}_{N}}(x))^{T} \, \mathcal{K}_{\mathcal{X}_{N}} ^{-1}\, \boldsymbol{k}_{\mathcal{X}_{N}}(y)
\, \mathrm{d}x \, \mathrm{d}y \notag \\
& = 
\int_{\Omega} \int_{\Omega} K(x, y) \, \mathrm{d}x \mathrm{d}y
+ 
\frac{1}{\det \mathcal{K}_{\mathcal{X}_{N}} } \, 
\det \left[
\begin{array}{c|ccc}
0 & k_{1} & \cdots & k_{N} \\
\hline
k_{1} & & & \\
\vdots & & \mathcal{K}_{\mathcal{X}_{N}}  & \\
k_{N} & & & 
\end{array}
\right] \notag \\
& = 
\frac{1}{\det \mathcal{K}_{\mathcal{X}_{N}} } \, 
\det \left[
\begin{array}{c|ccc}
k_{0} & k_{1} & \cdots & k_{N} \\
\hline
k_{1} & & & \\
\vdots & & \mathcal{K}_{\mathcal{X}_{N}}  & \\
k_{N} & & & 
\end{array}
\right]. 
\notag
\end{align}

\section{Integrability of the fundamental solutions of the Laplacian}
\label{sec:int_fund_sol}

Regard the fundamental solution $G(x,y)$ in~\eqref{eq:fund_sol_Laplace} 
as a function of $x$ for a fixed $y$. 
Then, for $d \geq 2$, it has a singularity at $x = y$. 
Here we confirm that 
it is integrable on a bounded neighborhood of the singularity. 
In the following, 
we assume that $y = 0$ without loss of generality. 
Let $B_{d}[0,R] := \{ x \in \mathbf{R}^{d} \mid \| x \| \leq R \}$ be a closed ball. 

\subsection{$d = 2$}

We assume that $R \leq 1$. Then, we have
\begin{align}
\int_{B_{2}[0,R]} |G_{2}(x,0)| \, \mathrm{d}x
& = 
- \int_{0}^{R} r \, \mathrm{d}r 
\int_{0}^{2\pi} \mathrm{d}\theta \ 
\frac{1}{2\pi} \log r
= 
- \frac{R^{2}}{4} (2 \log R - 1) < \infty.
\notag
\end{align}

\subsection{$d \geq 3$}

We have
\begin{align}
\int_{B_{d}[0,R]} |G_{d}(x,0)| \, \mathrm{d}x
& = 
\int_{0}^{R} r^{d-1} \, \mathrm{d}r 
\cdot 
s_{d} 
\cdot  
\frac{1}{2 (d-2) s_{d}} \frac{1}{r^{d-2}}
= 
\frac{R^{2}}{4(d-2)} < \infty.
\notag
\end{align}

\section{Proofs}
\label{sec:proofs}

\begin{proof}[Proof of Lemma~\ref{lem:Green_exps_delta_expt_delta}]
By integration by parts and equality~\eqref{eq:Laplace_Green_delta}, 
we have
\begin{align}
\frac{\partial}{\partial s} \text{(LHS)}
& = 
\int_{\mathbf{R}^{d}}
\mathrm{d}y \
\mathrm{e}^{t \varDelta_{y}} \delta_{b}(y)
\int_{\mathbf{R}^{d}}
\mathrm{d}x \ 
G_{d}(x,y) 
\, \varDelta_{x} \mathrm{e}^{s \varDelta_{x}} \delta_{a}(x)
\notag \\
& = 
\int_{\mathbf{R}^{d}}
\mathrm{d}y \
\mathrm{e}^{t \varDelta_{y}} \delta_{b}(y)
\int_{\mathbf{R}^{d}}
\mathrm{d}x \ 
\varDelta_{x} G_{d}(x,y) 
\, \mathrm{e}^{s \varDelta_{x}} \delta_{a}(x)
\notag \\
& = 
\int_{\mathbf{R}^{d}}
\mathrm{d}y \
\mathrm{e}^{t \varDelta_{y}} \delta_{b}(y)
\int_{\mathbf{R}^{d}}
\mathrm{d}x \ 
\delta_{y}(x)
\, \mathrm{e}^{s \varDelta_{x}} \delta_{a}(x)
\notag \\
& = 
\int_{\mathbf{R}^{d}}
\mathrm{d}y \
\mathrm{e}^{t \varDelta_{y}} \delta_{b}(y) \ 
\mathrm{e}^{s \varDelta_{y}} \delta_{a}(y)
\notag \\
& = 
\int_{\mathbf{R}^{d}}
\mathrm{d}y \
\mathrm{e}^{(s+t) \varDelta_{y}} \delta_{b}(y) \ 
\delta_{a}(y),
\notag
\end{align}
and
\begin{align}
\frac{\partial}{\partial s} \text{(RHS)}
& = 
\int_{\mathbf{R}^{d}}
\mathrm{d}y \
G_{d}(a,y) 
\, \varDelta_{y} \mathrm{e}^{(s+t) \varDelta_{y}} \delta_{b}(y)
\notag \\
& = 
\int_{\mathbf{R}^{d}}
\mathrm{d}y \
\varDelta_{y} G_{d}(a,y) 
\, \mathrm{e}^{(s+t) \varDelta_{y}} \delta_{b}(y)
\notag \\
& = 
\int_{\mathbf{R}^{d}}
\mathrm{d}y \
\delta_{a}(y)
\, \mathrm{e}^{(s+t) \varDelta_{y}} \delta_{b}(y).
\notag
\end{align}
Therefore they coincide.
Furthermore, 
by taking the limit of both sides as $s \to +0$, 
we have
\begin{align}
\lim_{s \to +0} \text{(LHS)}
& = 
\int_{\mathbf{R}^{d}}
\mathrm{d}x
\int_{\mathbf{R}^{d}}
\mathrm{d}y \
G_{d}(x,y) 
\, \delta_{a}(x)
\, \mathrm{e}^{t \varDelta_{y}} \delta_{b}(y)
\notag \\
& = 
\int_{\mathbf{R}^{d}}
\mathrm{d}y \
\mathrm{e}^{t \varDelta_{y}} \delta_{b}(y)
\int_{\mathbf{R}^{d}}
\mathrm{d}x \
G_{d}(x,y) 
\, \delta_{a}(x)
\notag \\
& = 
\int_{\mathbf{R}^{d}}
\mathrm{d}y \
\mathrm{e}^{t \varDelta_{y}} \delta_{b}(y) \ 
G_{d}(a,y),
\notag 
\end{align}
and 
\begin{align}
\lim_{s \to +0} \text{(RHS)}
& = 
\int_{\mathbf{R}^{d}}
\mathrm{d}y \
G_{d}(a,y) 
\, \mathrm{e}^{t \varDelta_{y}} \delta_{b}(y).
\notag
\end{align}
Hence the conclusion holds. 
\end{proof}

\begin{proof}[Proof of Lemma~\ref{lem:Green_expt_delta_eq}]
Note that $\mathrm{e}^{t \varDelta} \delta_{b}$ is the heat kernel with center $b$:
\begin{align}
\mathrm{e}^{t \varDelta_{y}} \delta_{b}(y)
=
\frac{1}{(4\pi t)^{d/2}} \, \exp \left( -\frac{\| y - b \|^{2}}{4t} \right).
\label{eq:heat_kernel_on_Rd}
\end{align}
Therefore the function $\mathrm{e}^{t \varDelta_{y}} \delta_{b}(y)$ 
depends only on $t$ and the difference $y - b$. 
In addition, 
the function $G_{d}(b,y)$ depends only on $y-b$ 
as shown by formula~\eqref{eq:fund_sol_Laplace}. 
Then, 
by integration by substitution with $z = y - b$, 
we have
\begin{align}
\int_{\mathbf{R}^{d}}
\mathrm{d}y \
G_{d}(b,y) 
\, \mathrm{e}^{t \varDelta_{y}} \delta_{b}(y)
& =
\int_{\mathbf{R}^{d}}
\mathrm{d}z \
G_{d}(0,z) 
\, \mathrm{e}^{t \varDelta_{z}} \delta_{0}(z)
\notag \\
& = 
\frac{1}{(4\pi t)^{d/2}} 
\int_{\mathbf{R}^{d}}
\mathrm{d}z \ 
G_{d}(z, 0) \, 
\exp \left( -\frac{\| z \|^{2}}{4t} \right). 
\notag 
\end{align}
Clearly this value is independent of $b$. 
Furthermore, 
to show the boundedness of this value, 
we use the following estimate:
\begin{align}
& \left|
\int_{\mathbf{R}^{d}}
\mathrm{d}y \
G_{d}(b,y) 
\, \mathrm{e}^{t \varDelta_{y}} \delta_{b}(y)
\right|
\notag \\
& \leq 
\frac{1}{(4\pi t)^{d/2}} 
\left(
\int_{B_{d}[0,1]} + \int_{\mathbf{R}^{d} \setminus B_{d}[0,1]} 
\right)
\mathrm{d}z \ 
|G_{d}(z, 0)| \, 
\exp \left( -\frac{\| z \|^{2}}{4t} \right)
\notag \\
& \leq 
\frac{1}{(4\pi t)^{d/2}} 
\left(
\int_{B_{d}[0,1]} \mathrm{d}z \ 
|G_{d}(z, 0)| 
+ 
\int_{\mathbf{R}^{d} \setminus B_{d}[0,1]} \mathrm{d}z \ 
|G_{d}(z, 0)| \, 
\exp \left( -\frac{\| z \|^{2}}{4t} \right)
\right). 
\label{eq:fund_sol_exp_delta_abs}
\end{align}
The first term in the parenthesis is bounded because of the argument in Section~\ref{sec:int_fund_sol}. 
To estimate the second term, we note that 
$|G_{d}(z,0)|$ depends only on $\| z \|$ and bounded by $c_{d} \| z \|$ for some constant $c_{d} > 0$ when $\| z \| \geq 1$. 
Therefore we have
\begin{align}
\int_{\mathbf{R}^{d} \setminus B_{d}[0,1]} \mathrm{d}z \ 
|G_{d}(z, 0)| \, 
\exp \left( -\frac{\| z \|^{2}}{4t} \right)
& \leq s_{d} \int_{1}^{\infty} r^{d-1} \, \mathrm{d}r \cdot c_{d} \, r \cdot \exp \left( -\frac{r^{2}}{4t} \right)
\notag \\
& = 
s_{d} c_{d} \int_{1}^{\infty} r^{d} \exp \left( -\frac{r^{2}}{4t} \right) \, \mathrm{d}r 
< \infty.
\notag
\end{align}
From these, 
the RHS of~\eqref{eq:fund_sol_exp_delta_abs} is bounded. 
\end{proof}

\begin{proof}[Proof of Lemma~\ref{lem:Green_expt_delta_disj}]
When $t \to +0$, 
the LHS tends to $G_{d}(a,b)$.
By differentiating the LHS, we have
\begin{align}
\frac{\partial}{\partial t} \text{(LHS)}
& = 
\int_{\mathbf{R}^{d}}
\mathrm{d}y \
G_{d}(a,y) 
\, \varDelta_{y} \mathrm{e}^{t \varDelta_{y}} \delta_{b}(y)
\notag \\
& =
\int_{\mathbf{R}^{d}}
\mathrm{d}y \
\varDelta_{y} G_{d}(a,y) 
\, \mathrm{e}^{t \varDelta_{y}} \delta_{b}(y)
\notag \\
& =
\int_{\mathbf{R}^{d}}
\mathrm{d}y \
\delta_{a}(y)
\, \mathrm{e}^{t \varDelta_{y}} \delta_{b}(y)
\notag \\
& = 
\left( \mathrm{e}^{t \varDelta} \delta_{b} \right) \! (a)
\notag \\
& = 
\frac{1}{(4\pi t)^{d/2}} \, \exp \left( -\frac{\| a - b \|^{2}}{4t} \right), 
\notag
\end{align}
where we use formula~\eqref{eq:heat_kernel_on_Rd} in the last equality. 
Then, the conclusion follows. 
\end{proof}

\begin{proof}[Proof of Lemma~\ref{thm:ineq_int_heat_fund_sol}]
Set $\alpha := \| x - y \|$, 
which satisfies $0 < \alpha \leq D$. 
For preparation, 
we define a function $g$ by
\begin{align}
g(s) 
:= 
\frac{1}{s^{d/2}} \, \exp \left( -\frac{\alpha^{2}}{4s} \right) 
\notag
\end{align}
for $s > 0$. 
Since 
\begin{align}
g'(s) = \frac{1}{4s^{d/2+2}} \, \exp \left( -\frac{\alpha^{2}}{4s} \right) \, (\alpha^{2} - 2ds), 
\notag
\end{align}
the function $g$ is unimodal and becomes maximum at 
\(
\displaystyle
s = \dfrac{\alpha^{2}}{2d}
\).
Then, if we set
\(
\displaystyle
s_{\ast} = \dfrac{D^{2}}{2d}
\), 
the function $g$ is monotone decreasing on $[s_{\ast}, \infty)$. 
Therefore we have
\begin{align}
g(s) \geq g(2s_{\ast}) \ 
\text{for any $s$ with $s_{\ast} \leq s \leq 2s_{\ast}$}. 
\notag
\end{align}
By using this inequality, 
we can derive the following estimate: 
\begin{align}
& \int_{0}^{t} \frac{1}{(4\pi s)^{d/2}} \, \exp \left( -\frac{\alpha^{2}}{4s} \right) \, \mathrm{d}s
\notag \\
& \geq 
\frac{1}{(4\pi)^{d/2}} 
\left( \int_{s_{\ast}}^{2s_{\ast}} + \int_{2s_{\ast}}^{t}\right) 
g(s) \, \mathrm{d}s
\notag \\
& \geq 
\frac{1}{(4\pi)^{d/2}} 
\left(
s_{\ast} \, g(2s_{\ast}) 
+ 
\exp \left( -\frac{\alpha^{2}}{8s_{\ast}} \right) 
\int_{2s_{\ast}}^{t} \frac{1}{s^{d/2}} \, \mathrm{d}s
\right)
\notag \\
& \geq 
\frac{1}{(4\pi)^{d/2}} 
\left(
s_{\ast} \, g(2s_{\ast}) 
+ 
\exp \left( -\frac{D^{2}}{8s_{\ast}} \right) 
\int_{2s_{\ast}}^{t} \frac{1}{s^{d/2}} \, \mathrm{d}s
\right)
\notag \\
& =
\begin{cases}
\displaystyle
\frac{1}{(4\pi)^{d/2}} 
\left[
\frac{d^{d/2-1}}{2D^{d-2}} \, \exp \left( -\frac{d \alpha^{2}}{4D^{2}} \right) 
+
\frac{\mathrm{e}^{-d/4}}{1-d/2}
\left(
\frac{1}{t^{d/2-1}} - \frac{d^{d/2-1}}{D^{d-2}}
\right)
\right] & (d \neq 2), \\[12pt]
\displaystyle
\frac{1}{(4\pi)^{d/2}} 
\left[
\frac{d^{d/2-1}}{2D^{d-2}} \, \exp \left( -\frac{d \alpha^{2}}{4D^{2}} \right) 
+
\mathrm{e}^{-d/4}
\log \left(
\frac{td}{D^{2}}
\right)
\right] & (d = 2). \\[6pt]
\end{cases}
\end{align}
Thus the conclusion holds.
\end{proof}

\end{document}